\documentclass{conm-p-l}

\usepackage{euscript}
\usepackage{amsmath}
\usepackage{amsthm}
\usepackage{epsfig}
\usepackage{amssymb}

\DeclareMathOperator{\Hom}{{\rm Hom}}

\DeclareMathOperator{\coker}{{\rm coker}}

\DeclareMathOperator{\rank}{{\rm rank}}

\numberwithin{equation}{section}
\numberwithin{equation}{subsection}

\theoremstyle{plain}
\newtheorem{theorem}[equation]{Theorem}
\newtheorem{lemma}[equation]{Lemma}
\newtheorem{proposition}[equation]{Proposition}
\newtheorem{corollary}[equation]{Corollary}

\theoremstyle{definition}
\newtheorem{example}[equation]{Example}
\newtheorem{remark}[equation]{Remark}

\newtheorem{bekezdes}[equation]{}

\numberwithin{equation}{section}
\numberwithin{equation}{subsection}

\oddsidemargin 30pt \evensidemargin 0pt \marginparsep 0pt
\topmargin 0.2pt \baselineskip 14pt \textwidth 6in \textheight 9in

\title{Two exact sequences for lattice cohomology}

\author{Andr\'as N\'emethi}
\address{A. R\'enyi Institute of Mathematics, 1053 Budapest,  Re\'altanoda u. 13-15,  Hungary.}
\email{nemethi@renyi.hu}
\thanks{The author is partially supported by OTKA Grant K67928.}

\keywords{plumbed manifolds, plumbing graphs,
3-manifolds, lattice cohomology,  surface singularities,
rational singularities,
Seiberg-Witten invariant, Heegaard-Floer homology}

\subjclass[2000]{Primary. 14B05, 14J17, 32S25, 57M27, 57R57.
Secondary. 14E15, 32S45, 57M25}

\date{}

\begin{document}

\maketitle

\begin{center}
 {\em Dedicated to Henri Moscovici on his 60th birthday.}
\end{center}

\pagestyle{myheadings} \markboth{{\normalsize Andr\'as
N\'emethi}}{ {\normalsize Two exact sequences for lattice cohomology}}

\newcommand{\ssw}{{\bf sw}}
\newcommand{\et}{\mathcal{T}}
\newcommand{\bS}{{\mathbb S}}
\newcommand{\bma}{\mbox{\boldmath$a$}}
\newcommand{\bmb}{\mbox{\boldmath$b$}}
\newcommand{\bmc}{\mbox{\boldmath$c$}}
\newcommand{\bme}{\mbox{\boldmath$e$}}
\newcommand{\bmi}{\mbox{\boldmath$i$}}
\newcommand{\bmj}{\mbox{\boldmath$j$}}
\newcommand{\bmv}{\mbox{\boldmath$v$}}
\newcommand{\bmk}{\mbox{\boldmath$k$}}
\newcommand{\bmm}{\mbox{\boldmath$m$}}
\newcommand{\bms}{\mbox{\boldmath$s$}}
\newcommand{\bSW}{\mbox{\boldmath$SW$}}
\newcommand{\bmf}{\mbox{\boldmath$f$}}
\newcommand{\bmg}{\mbox{\boldmath$g$}}
\newcommand{\gq}{{\mathfrak q}}
\newcommand{\xo}{o}
\newcommand{\veeK}{{\vee}}
\newcommand{\gh}{g}
\newcommand{\cH}{\mathcal{H}}

\newcommand{\lp}{{l}}
\newcommand{\ev}{\varepsilon}
\newcommand{\tx}{\tilde{X}}
\newcommand{\tz}{\tilde{Z}}
\newcommand{\call}{{\mathcal L}}
\newcommand{\calm}{{\mathcal M}}
\newcommand{\calx}{{\mathcal X}}
\newcommand{\calo}{{\mathcal O}}
\newcommand{\calt}{{\mathcal T}}
\newcommand{\cali}{{\mathcal I}}
\newcommand{\calj}{{\mathcal J}}
\newcommand{\calC}{{\mathcal C}}
\newcommand{\calS}{{\mathcal S}}
\newcommand{\calQ}{{\mathcal Q}}
\newcommand{\calF}{{\mathcal F}}

\newcommand{\cs}{\langle \chi_0\rangle}

\newcommand{\cc}{\bar{C}}
\newcommand{\vp}{\varphi}

\def\mmod{\mbox{mod}}
\let\d\partial
\def\EE{\mathcal E}
\newcommand{\cC}{\EuScript{C}}
\def\C{\mathbb C}
\def\Q{\mathbb Q}
\def\R{\mathbb R}
\def\bS{\mathbb S}
\def\bH{\mathbb H}
\def\bB{\mathbb B}\def\bC{\mathbb C}\def\bA{\mathbb A}
\def\Z{\mathbb Z}
\def\N{\mathbb N}
\def\bn{\mathbb N}
\def\bp{\mathbb P}\def\bt{\mathbb T}
\def\eop{$\hfill\square$}
\def\bif{(\, , \,)}
\def\coker{\mbox{coker}}
\def\im{{\rm Im}}

\newcommand{\Gammma}{{G}}
\newcommand{\no}{\noindent}
\newcommand{\bfc}{{\mathbb C}}
\newcommand{\bfq}{{\mathbb Q}}
\newcommand{\cale}{{\mathcal E}}
\newcommand{\calw}{{\mathcal W}}
\newcommand{\calv}{{\mathcal V}}
\newcommand{\calP}{{\mathcal P}}
\newcommand{\calI}{{\mathcal I}}
\newcommand{\cala}{{\mathcal A}}
\newcommand{\calr}{{\mathcal R}}
\newcommand{\calG}{{\mathcal G}}
\newcommand{\bc}{{\mathbb C}}
\newcommand{\bez}{B_{\epsilon_0}}
\newcommand{\br}{{\mathbb R}}
\newcommand{\bq}{{\mathbb Q}}
\newcommand{\sez}{S_{\epsilon_0}}
\newcommand{\ep}{\epsilon}
\newcommand{\vs}{\vspace{3mm}}
\newcommand{\si}{\sigma}
\newcommand{\Gammas}{S}
\newcommand{\Gj}{\Gamma\setminus j_0}
\newcommand{\Gjp}{\Gamma_{ j_0}^+}
\newcommand{\q}{w}
\newcommand{\btt}{{\bf t}}

\begin{abstract}
This article is a continuation of \cite{NLC}, where the lattice cohomology of
connected negative definite plumbing
graphs was introduced. Here we consider the more general situation of non--degenerate plumbing graphs, and
we establish two exact sequences for their lattice cohomology.
The first is the analogue of the surgery
exact triangle proved by Ozsv\'ath and Szab\'o for the Heegaard--Floer invariant $HF^+$; for the lattice cohomology
over $\Z_2$--coefficients it was proved  by J. Greene in \cite{JG}. Here we prove it over $\Z$,
and we supplement it by some additional properties valid for negative definite graphs.
The second exact sequence is an adapted version which does not mix the
classes of the characteristic elements ($spin^c$--structures); it was partially motivated by the surgery
formula for the Seiberg--Witten invariant obtained  in  \cite{BN}.
For this, we define the {\it relative lattice cohomology} and we determine its
Euler characteristic in terms of Seiberg--Witten invariants.
\end{abstract}


{\small

\section{Introduction}
The lattice cohomology $\{\bH^q(\Gamma)\}_{q\geq 0}$  was introduced in \cite{NLC}. In its original version,
it was associated with any connected negative definite plumbing graph $\Gamma$, or, equivalently, with
any oriented 3--manifold, which might appear as the link of a local complex normal surface singularity.
Lattice cohomology (together with the graded roots) plays a crucial role   in the comparison of the analytic and topological invariants of surface singularities, cf. \cite{NOSZ,trieste,NLC},
see also \cite{BN,NN1,NSW} for relations with the Seiberg--Witten invariants of the link.  Additionally, the lattice cohomology (conjecturally)
offers a combinatorial description for the Heegaard--Floer homology of Ozsv\'ath and Szab\'o (for
this theory see \cite{OSzP,OSz,OSzTr} and the long list of articles
of Ozsv\'ath and Szab\'o). Indeed,  in \cite{NLC} the author conjectured that
$\oplus_{q\ even/odd}\bH^q(\Gamma)$ is isomorphic as a graded $\Z[U]$--module with
$HF^+_{even/odd}(-M(\Gamma))$, where $M(\Gamma)$ is the plumbed 3--manifold associated with $\Gamma$. (Recall that at this moment there is no combinatorial definition/characterization of $HF^+$.)

For rational and `almost rational' graphs this correspondence was established in \cite{OSzP,NOSZ}
(see also \cite{NR} for a different situation  and \cite{NLC} for related results).
A possible machinery which might help  to prove the general conjecture
is based on the surgery exact sequences. They are established for the Heegaard--Floer theory in the work
of Ozsv\'ath and Szab\'o. Our goal is to prove  the analogous exact sequences for the lattice cohomology.
In fact, independently of the above conjecture and correspondence, the proof of such exact sequences
is of major importance, and they are fundamental in the computation and in finding the main properties of the lattice cohomologies.

The formal, combinatorial definition of the lattice cohomology permits to extend its definition to
arbitrary graphs (plumbed 3--manifolds), the connectedness and negative definiteness
assumptions can be dropped.
Nevertheless, in the proof of the exact sequences,
we will deal only with non--degenerate graphs (they are those
graphs whose associated intersection form has non--zero determinant).

More precisely, for any graph $\Gamma$ and fixed vertex $j_0$, we consider the graphs
$\Gamma\setminus j_0$ and
$\Gjp$, where the first one is obtained from $\Gamma$ by deleting the vertex $j_0$ and adjacent edges,
while the second one is obtained from $\Gamma$ by replacing the decoration $e_{j_0}$
of the vertex $j_0$  by $e_{j_0}+1$.
We will assume that all these graphs are non--degenerate. Then Theorem \ref{ES}
establishes the following long exact sequence.

\vspace{2mm}

\noindent {\bf Theorem A.} {\it  Assume that the graphs $\Gjp, \ \Gamma$ and $\Gj$ are non--degenerate.
Then
$$\cdots \longrightarrow \bH^q(\Gjp)\stackrel{\bA^q}{\longrightarrow} \bH^q(\Gamma)\stackrel{\overline{\bB}^q}{\longrightarrow} \bH^q(\Gj)\stackrel{\bC^q}{\longrightarrow} \bH^{q+1}(\Gjp)\longrightarrow \cdots$$
is an exact sequence of $\Z[U]$--modules.}

\vspace{2mm}

The first 3 terms of the exact sequence (i.e. the $\bH^0$--part) were already used in \cite{OSzP}
(see also \cite{NOSZ}),
and the existence of the long exact sequence was already proved over $\Z_2$--coefficients in \cite{JG}.
Here we
establish its validity over $\Z$. In the proof we not only find the correct sign--modifications, but we also
replace some key arguments. (Nevertheless, the proof follows the main steps of \cite{JG}.)

For negative definite graphs (i.e. when $\Gjp$, hence $\Gamma$ and $\Gj$ too are negative definite), the above
exact sequence has some important additional properties. By general theory (cf. \cite{NLC}), if $\Gamma$ is negative definite then $\bH^0(\Gamma)$ contains a canonical submodule $\bt$ and one has a direct sum decomposition
 $\bH^0=\bt\oplus \bH^0_{red}$. $\bt$ is the analogue of the image of $HF^\infty$ in $HF^+$ in the Heegaard--Floer theory. On the other hand, in Heegaard--Floer theory
 by a result of Ozsv\'ath and Szab\'o, the operator $\bC^0$ restricted on $\bt(\Gj)$
 is zero (this follows from the fact that the corresponding cobordism connecting $\Gj$ and $\Gjp$ is coming from a  non--negative definite surgery). The analogue of this result is Theorem~\ref{ES2}:

\vspace{2mm}

\noindent {\bf Theorem B.} {\it
Assume that $\Gjp$ is negative definite. Consider the exact sequence
$$0\longrightarrow \bH^0(\Gjp)\stackrel{\bA^0}{\longrightarrow} \bH^0(\Gamma)\stackrel{\overline{\bB}^0}{\longrightarrow} \bH^0(\Gj)\stackrel{\bC^0}{\longrightarrow} \bH^{1}(\Gjp)\longrightarrow \cdots$$
and the canonical submodule  $\bt(\Gj)$ of \, $\bH^0(\Gj)$. Then
 the restriction $\bC^0|\bt(\Gj)$ is zero.}

\vspace{2mm}

Using the  above results one proves for a graph $\Gamma$
 {\it with at most  $n$ bad vertices} (for the definition, see
\ref{badver}) the vanishing $\bH^q_{red}(\Gamma)=0$ for any $q\geq n$
(where $\bH^q_{red}=\bH^q$ for $q>0$).

$\bH^*(\Gamma)$ has a natural direct sum decomposition indexed by the set of
the characteristic element classes
(in the case of the Heegaard--Floer, or Seiberg--Witten theory, they
correspond to the $spin^c$--structures of $M(\Gamma)$). Namely,
$\bH^*(\Gamma)=\oplus_{[k]} \bH^*(\Gamma,[k])$.
 In the exact sequence of Theorem A the operators mix these classes. Theorem \ref{ESk}
 provides an exact sequence which
connects the lattice cohomologies of $\Gamma $ and $\Gj$ with fixed (un--mixed) characteristic element classes.
 More precisely, for any characteristic element $k$ of $\Gamma$, we define a
 $\Z[U]$--module $\{\bH_{rel}^q(k)\}_{q\geq 0}$, the  {\it relative lattice cohomology} associated with
 $(\Gamma,j_0,k)$. It has  finite $\Z$--rank and it
 fits in the following exact sequence:

\vspace{2mm}

\noindent {\bf Theorem C.} {\it
Assume that $\Gamma$ and $\Gamma\setminus j_0$ are non--degenerate.
One has   a long exact sequence of \, $\Z[U]$--modules:
$$\cdots \longrightarrow \bH^q_{rel}(k)\stackrel{\bA^q_{rel}}{\longrightarrow} \bH^q(\Gamma,[k])\stackrel{\bB^q_{rel}}{\longrightarrow} \bH^q(\Gj,[R(k)])
\stackrel{\bC^q_{rel}}{\longrightarrow} \bH^{q+1}_{rel}(k)\longrightarrow \cdots$$
where $R(k)$ is the restriction of $k$ (and the operators also depend on the choice of the representative $k$).}

\vspace{2mm}

The existence of such a long exact sequence is predicted and motivated by the surgery formula for the
Seiberg--Witten invariant established in \cite{BN}: the results of section \ref{s:REL} resonate perfectly 
with the corresponding statements of Seiberg--Witten theory. This allows us to compute the
Euler characteristic of the relative lattice cohomology in terms of the Seiberg--Witten invariants associated with $M(\Gamma)$ and
$M(\Gamma\setminus j_0)$.

\section{Notations and preliminaries}
\subsection{Notations} First we will introduce the needed notations regarding plumbing graphs and
we recall the definition of the {\it lattice cohomology} from \cite{NLC}.
Since in [loc.cit.] the graphs were {\it connected} and {\it negative definite} (as the
`normal' plumbing representations of isolated complex  surface singularity links), at the beginning
{\it we will start with these assumptions}.

Let $\Gamma$ be such a  plumbing graph
with vertices $\calj$ and edges $\cale$; we set $|\calj|=s$.
 and we fix an order on $\calj$.
$\Gamma$ can also be codified in the lattice $L$, the free $\Z$--module generated by $\{E_j\}_{j\in \calj}$ and the
`intersection form' $\{(E_i,E_j)\}_{i,j}$, where $(E_i,E_j)$ for $i\not=j$ is 1 or 0 corresponding to the fact that
$(i,j)$ is an edge or not; and $(E_i,E_i)$ is the decoration of the vertex $i$, usually denoted by
$e_i$. (The graph is negative definite if this form is so.)
 The graph $\Gamma$ may have  cycles, but we will assume that  all the genus decorations are zero (i.e.
 we plumb $S^1$--bundles over $S^2$). The associated plumbed 3--manifold
$M(\Gamma)$ is not necessarily a  rational homology sphere, this happens exactly when
 the graph  is a tree.  Let $L'$ be the dual lattice
$\{l'\in L\otimes \Q\, :\, (l',L)\subset \Z\}$; it is generated by $\{E_j^*\}_j$, where $(E^*_j,E_i)=-\delta_{ij}$ (the {\it negative} of the Kronecker--delta). Moreover, $$Char:=\{k\in L'\,:\,
\chi_k(l):=-\frac{1}{2}(k+l,l)\in\Z \ \ \mbox{for all $l\in L$}\}$$ denotes the set of characteristic elements of $L$ (or $\Gamma$).

As usual (following Ozsv\'ath and Szab\'o), $\et_0^+$ denotes the $\Z[U]$--module
$\Z[U,U^{-1}]/U\Z[U]$ with grading $\deg(U^{-d})=2d$ ($d\geq 0$).
More generally, for any  $r\in\Q$ one defines $\et^+_r$, the same module as $\et_0^+$, but graded
(by $\Q$) in such a way that the $d+r$--homogeneous elements of $\et^+_r$ are isomorphic with the
$d$--homogeneous elements of $\et_0^+$.
(E.g., for $m\in \Z$,
$\et_{2m}^+=\Z[U,U^{-1}]/U^{-m+1}\Z[U]$.)

\subsection{The lattice cohomology associated with $k\in Char$ \cite{NLC}.}\label{s:LC}
$L\otimes \R=\Z^s\otimes_\Z \R$ has a natural cellular decomposition into cubes. The
set of zero--dimensional cubes is provided  by the lattice points
$L$. Any $l\in L$ and subset $I\subset {\mathcal J}$ of
cardinality $q$  defines a $q$--dimensional cube, which has its
vertices in the lattice points $(l+\sum_{j\in I'}E_j)_{I'}$, where
$I'$ runs over all subsets of $I$. On each such cube we fix an
orientation. This can be determined, e.g.,  by the order
$(E_{j_1},\ldots, E_{j_q})$, where $j_1<\cdots < j_q$, of the
involved base elements $\{E_j\}_{j\in I}$. The set of oriented
$q$--dimensional cubes defined in this way is denoted by $\calQ_q$
($0\leq q\leq s$).

Let $\calC_q$ be the free $\Z$--module generated by oriented cubes
$\square_q\in\calQ_q$. Clearly, for each $\square_q\in \calQ_q$,
the oriented boundary $\partial \square_q$ has the form
$\sum_k\varepsilon_k \, \square_{q-1}^k$ for some
$\varepsilon_k\in \{-1,+1\}$. Here, in this sum, we write only
those $(q-1)$--cubes which appear with non--zero coefficient.
One sees that  $\partial\circ\partial=0$,  but, obviously, the
homology of the chain complex $(\calC_*,\partial)$ is trivial:
it is just the homology of $\R^s$.  In order to get
a more interesting (co)homology, one needs to consider a {\em weight
functions} $w:\calQ_q\to \Z$ ($0\leq q\leq s$).
In the present case this will be defined, for each $k\in Char$ fixed, by
$$w(\square_q):=\max\{\chi_k(l)\, :\, \ \mbox{$l$ \ is a vertex of $\square_q$}\}.$$

Once the weight function is defined, one considers
 $\calF^q$, the set of morphisms  $\Hom_{\Z}(\calC_q,\et^+_0)$
  with finite support on $\calQ_q$.
Notice that $\calF^q$ is, in fact, a $\Z[U]$--module by
$(p*\phi)(\square_q):=p(\phi(\square_q))$ ($\phi\in \calF^q$, $p\in \Z[U]$).
Moreover, $\calF^q$ has a $2\Z$--grading: $\phi\in \calF^q$ is
homogeneous of degree $2d\in\Z$ if for each $\square_q\in\calQ_q$
with $\phi(\square_q)\not=0$, $\phi(\square_q)$ is a homogeneous
element of $\et^+_0$ of degree $2d-2\cdot w(\square_q)$.

Next, one defines $\delta:\calF^q\to \calF^{q+1}$. For
this, fix $\phi\in \calF^q$ and we show how $\delta(\phi)$ acts on
a cube $\square_{q+1}\in \calQ_{q+1}$. First write
$\partial\square_{q+1}=\sum_k\varepsilon_k \square ^k_q$, then
set
\begin{equation}\label{eq:delta}
(\delta(\phi))(\square_{q+1}):=\sum_k\,\varepsilon_k\,
U^{w(\square_{q+1})-w(\square^k_q)}\, \phi(\square^k_q).
\end{equation}
One verifies that  $\delta\circ\delta=0$, i.e.
$(\calF^*,\delta)$ is a cochain complex (with $\delta$
homogeneous of degree zero);
its homology is denoted by $\{\bH^q(\Gamma,k)\}_{q\geq 0}$.

 $(\calF^*,\delta_w)$ has a natural
augmentation too. Indeed, set $m_k:=\min_{l\in L}\{\chi_k(l)\}$.
Then one defines the $\Z[U]$--linear map
$\epsilon:\et^+_{2m_k}\longrightarrow \calF^0$ such that
$\epsilon(U^{-m_k-s})(l)$ is the class of $U^{-m_k+\chi_k(l)-s}$
in $\et^+_0$ for any integer $s\geq 0$. Then
 $\epsilon$ is injective and homogeneous of degree
zero, $\delta\circ\epsilon=0$.
The homology of  the augmented cochain complex
$$0\longrightarrow\et^+_{2m_k}\stackrel{\epsilon}{\longrightarrow}
\calF^0\stackrel{\delta}{\longrightarrow}\calF^1
\stackrel{\delta}{\longrightarrow}\ldots$$ is called the {\em
reduced lattice cohomology} $\bH_{red}^*(\Gamma,k)$. For any
$q\geq 0$, both $\bH^q$ and $\bH_{red}^q$ admit an induced graded
$\Z[U]$--module structure and $\bH^q=\bH^q_{red}$ for $q>0$.

Note that $\epsilon$ provides a canonical embedding of $\et^+_{2m_k}$ into
$\bH^0$.  Moreover,  one has a graded
$\Z[U]$--module isomorphism
$\bH^0=\et^+_{2m_k}\oplus\bH^0_{red}$, and
$\bH^*_{red}$ has finite $\Z$--rank.
\begin{remark}
For the definition of the lattice cohomology for more general
weight functions and graphs with non--zero genera, see \cite{NLC}.
\end{remark}

\subsection{Reinterpretation of the lattice cohomology}
If $k'=k+2l$ for some $l\in L$ then $\bH^*(\Gamma,k)$ and
$\bH^*(\Gamma,k')$ are isomorphic up to a degree--shift, cf. \cite[(3.3)]{NLC}.
In fact, all the modules $\{\bH^*(\Gamma,k)\}_{k}$ can be packed into only one object,
more in the spirit of \cite{OSzP}. This  was used in \cite{JG} too.

In this way, for any fixed $k\in Char$, $L$ is identified with the sublattice $k+2L\in Char$,
and with the notation $l':=k+2l$ one has $\chi_k(l)=-\frac{1}{8}(l',l')+\frac{1}{8}(k,k)$. In particular,
up to a shift in degree, for each fixed $k$, $l\mapsto \chi_k(l)$ and $l'\mapsto -\frac{1}{8}(l',l')$
define the same weight--function (on the  cubes, see below), hence the same cohomology.
In fact, we will modify even this  weight function by $s/8$
(in this way the blowing up will induce a degree preserving isomorphism, cf. \S\ref{s:3}),  and set:
\begin{equation}\label{eq:q}
\q: Char\to \Q, \ \ \ \ \ \ \q(k):=-\frac{k^2+s}{8} \ \ \ \ (s=|\calj|).
\end{equation}
The $q$--cubes  $\square_{q}\in \calQ_{q}$ are associated with
pairs $(k,I)\in Char\times \calP(\calj)$, $|I|=q$, (here $\calP(\calj)$ denotes the power set of $\calj$),
and have the form $\{k+2\sum_{j\in I'}E_j)_{I'}$, where
$I'$ runs over all subsets of $I$.
The weights are defined by
\begin{equation}
w(\square_q)=w((k,I))=\max_{I'\subset I}\big\{\, \q(k+2\sum_{j\in I'}E_j)\,\big\}.
\end{equation}
Moreover,  $\calF^q $ are elements of $\Hom_{\Z}(\calQ_q,\et^+_0)$
with finite support.
Similarly as above,  $\calF^q$ is  a $\Z[U]$--module with a $\Q$--grading: $\phi\in \calF^q$ is
homogeneous of degree $r$ if for each $\square_q\in\calQ_q$
with $\phi(\square_q)\not=0$, $\phi(\square_q)$ is a homogeneous
element of $\et^+_0$ of degree $r-2\cdot w(\square_q)$.

It is convenient to consider the module of (infinitely supported)
homological cycles too: let $\calF_q$ be the direct product
 of  $\Z_{\geq 0}\times \calQ_q$ copies of $\Z$ (considered already in \cite{OSzP} for $q=0$). We write the pair $(m,\square)$ as $U^m \square$.
 $\calF_q$ becomes a $\Z[U]$--module by $U(U^m \square)=U^{m+1} \square$.
 Clearly $\calF^q=\Hom_{\Z[U]}(\calF_q,\et^+_0)$, i.e. $\phi(U\square)=U\phi(\square)$ for any $\phi$.

$\delta:\calF^q\to \calF^{q+1}$ is defined as in (\ref{eq:delta}) using the new weight--function,
 or by $\delta(\phi)(\square)=\phi(\partial (\square))$, where for $\square=(k,I)=(k,\{j_1,\ldots,j_q\})$ one has:
\begin{equation}\label{eq:partial}
\partial (k,I)=\sum_{l=1}^q (-1)^l\big( \,
U^{w(k,I)-w(k,I\setminus j_l)} (k,I\setminus j_l)-U^{w(k,I)-w(k+2E_{j_l},I\setminus j_l)}
(k+2E_{j_l},I\setminus j_l)\, \big).
\end{equation}
The cohomology of $(\calF^*,\delta)$ is denoted by $\bH^*(\Gamma)$.
Since the vertices of a cube belong to the same class $Char/2L$ (where a class has the form
$[k]=\{k+2l\}_{l\in L}\subset Char$), $\bH^*(\Gamma)$ has a natural direct sum decomposition
$$\bH^*(\Gamma)=\oplus_{[k]\in Char/2L}\ \bH^*(\Gamma,[k]).$$
In fact, if $[k_1]=[k_2]$ then $w(k_1)-w(k_2)\in \Z$.

Since $\Gamma $ is negative definite, for each class $[k]\in Char/2L$ one has a well--defined
rational number
$$d[k]:=-\max _{k\in[k]} \frac{k^2+s}{4}=2\cdot \min_{k\in[k]} \q(k).$$
Then (cf. \ref{s:LC}) one has a direct sum decomposition:
\begin{equation}\label{eq:et}
\bH^0(\Gamma,[k])=\et^+_{d[k]}\oplus \bH^0_{red}(\Gamma,[k]).
\end{equation}
Sometimes we write $\bt:=\oplus_{[k]}\et^+_{d[k]}$ for the canonical submodule
of $\bH^0$, which satisfies  $\bH^0=\bt\oplus \bH^0_{red}$.

Following  \cite{NOSZ,NLC,NSW} we define the Euler characteristic of $\bH^*(\Gamma,[k])$ by
\begin{equation}\label{eq:EU}
eu(\bH^*(\Gamma,[k])):=-d[k]/2+\sum_{q\geq 0}(-1)^q\rank_\Z\, \bH^q_{red}(\Gamma,[k]).
\end{equation}

\begin{remark}\label{re:1}
 For any $\phi \in\calF^q$ and $\ell\geq 0$ set $U^{-\ell}*\phi\in \calF^q$ defined as follows: if $\phi(\square)=\sum_{m\geq 0}
 a_{m,\square}U^{-m}$, then $(U^{-\ell}*\phi)(\square)=\sum_{m\geq 0}
 a_{m,\square}U^{-m-\ell}$. Notice that $U(U^{-1}*\phi)=\phi$ (but, in general,
  $U^{-1}*(U\phi)\not=\phi$).

 For any $\square\in\calQ_q$ let  $\square^\veeK$ be that element of $\calF^q$ which sends $\square$ in
 $1\in\et^+_0$ and any other element into zero.   Then any element of $\calF^q$ is a finite $\Z$--linear combination of elements of type $U^{-\ell}*\square^\veeK$.
\end{remark}

\subsection{Generalizations.  Graphs which are not negative definite.}\label{2.4}
 Since the definition of the lattice cohomology
is purely algebraic/combinatorial, its definition can be considered for any graph, or even for any lattice $L$  with fixed base elements $\{E_j\}_j$.
Nevertheless, in this note we assume that the graphs are non--degenerate.
Of course, doing this generalization, some of the properties of the lattice cohomology associated with
connected negative definite graphs will not survive. Here we wish to point out some of the differences.

First, we notice that dropping the connectedness of the graph basically has no effect (this fact already was used in the main inductive proofs of \cite{OSzP} and \cite[(8.3)]{NOSZ}).  Dropping the negative definiteness is more serious. In order to explain this, we will introduce the following spaces: for any fixed class $[k]$
and any real number $r$ we define $S_r$ as the union of all cubes $\square\in \calQ_q $ with
all vertices in $[k]$ and  $w(\square)\leq r$.

If $\Gamma$ is negative definite then $S_r$ is compact for any $r$. In particular, the weight function takes its minimum on it, and for each class $[k]=Char/2L$  a decomposition $\bH^0(\Gamma,[k])=\et^+_{d([k])}\oplus \bH^0_{red} (\Gamma,[k])$ can be defined. This property will not survive for general graphs. In fact, if the components of  $S_r$ are not compact, then  we even might have the vanishing $\bH^0(\Gamma)=0$ (and this can happen simultaneously with the non--vanishing of
$\bH^1$), see e.g. (\ref{+1}). Also, in \cite[Theorem 3.1.12]{NLC} the lattice cohomology is recovered from
the simplicial cohomology of the spaces $S_r$. In the general case,
the adapted  version of that theorem (with similar proof), valid for any lattice,
states that the same formula is valid once if we replace the cohomology groups $H^q(S_r,\Z)$ by the
cohomology groups with compact support $H^q_c(S_r,\Z)$.
Namely:
$$\bH^*(\Gamma,[k])\simeq\bigoplus_{r\in w(k)+\Z}\, H^*_c(S_r,\Z).$$
We wish to emphasize, that replacing the cohomology with cohomology with compact support
is a conceptual modification: the two groups have different functorial properties.

In fact,  exactly this second point of view is determinative  in the
definition of the lattice cohomology (and in the definition of the maps in the surgery formula treated next):
basically we mimic the infinitely supported homology and the (dual) cohomology with compact support,
and the corresponding functors associated with them.

Since the surgery exact sequence considered in the next section is valid for any non--degenerate
lattice, it is very convenient to extend the theory to arbitrary graphs (or, at least for non--degenerate ones)
in order to have a larger flexibility for computations. Nevertheless, we have to face the following crucial problem. An oriented  plumbed 3--manifold $M(\Gamma)$ has many different plumbing representations $\Gamma$.
They are connected by the moves of the (oriented) plumbing calculus. For negative definite graphs the only moves are the blowups (and their inverses) by (rational) $(-1)$-vertices. In \cite[(3.4)]{NLC} it is proved that the lattice cohomology is stable with respect to these moves, see also  \S\ref{s:3} here. On the other hand, if we
enlarge our plumbing graphs, this stability condition will not survive: the same 3--manifold can be represented
by many different lattices with rather different lattice cohomologies.

More precisely, for negative definite graphs one conjectures (cf. \cite{NLC}) that from the lattice cohomology one can recover (in a combinatorial way) the Heegaard--Floer homology of Ozsv\'ath--Szab\'o. In particular, the lattice cohomology carries a geometric meaning depending only on $M(\Gamma)$. This geometric meaning is lost in the context of general graphs (or, at least, it is not so direct).

\begin{example} \label{+1}
$S^3$ can be represented by a graph with one vertex, which has decoration $-1$. Computing the lattice cohomology of this graph we get $\bH^q=0$ for $q\not=0$ and $\bH^0=\bH^0_{red}=\et^+_{0}$.
(This is the Heegaard--Floer homology $HF^+(S^3)$.) On the other hand, it is instructive to compute the lattice cohomology of another graph, which has one vertex, but now with decoration $+1$. Its lattice cohomology is
$\bH^q=0$ for $q\not=1$ and $\bH^1= \et^+_{-1/2}$. This graph also represent $S^3$, and the two graphs can be connected by a sequence of non--empty graphs and $\pm 1$--blow ups and downs. In particular, we conclude that
the lattice cohomology is not stable with respect to blowing up/down $(+1)$--vertices.
\end{example}

\begin{remark}\label{re:weightS}
Replacing negative definite graphs with arbitrary non--degenerate ones we can also adopt the
definition of the weight function (\ref{eq:q}) by modifying into
\begin{equation}\label{eq:q2}
\q_\dagger: Char\to \Q, \ \ \ \ \ \ \q_\dagger(k):=\frac{-k^2+3\sigma+2s}{8} \ \ \ \ (\sigma=\mbox{signature of }(\,,\,),\ \ s=|\calj|),
\end{equation}
a more common expression associated with (4--manifold intersection) forms which are not negative definite.
In this note we will not do this, but the interested reader preferring this expression might shift
all the weights below by $3(\sigma+s)/8$.
\end{remark}

\section{Blowing up $\Gamma$}\label{s:3}

\subsection{}\label{3.1}
Since in the main construction we will need
some of the operators induced by blowing down, we will make explicit the involved morphisms.

The next  discussion provides a new proof of the stability of the lattice cohomology
in the case when we blow up a vertex (for the old proof see \cite[(3.4)]{NLC}).

{\it Starting from now, the graph is neither necessarily connected, nor necessarily negative definite.
Nevertheless, we will assume that the intersection form  is non--degenerate. }

We assume that $\Gamma'$ is obtained from
$\Gamma$ by `blowing up the vertex $j_0$'.  More
precisely, $\Gamma'$ denotes a graph with one more vertex and one
more edge: we glue to the vertex $j_0$ by the new
edge the new vertex $E_{new}$ with decoration $-1$ (and genus 0), while the
decoration of $E_{j_0}$ is modified from $e_{j_0}$ into
$e_{j_0}-1$,  and we keep all the other decorations.
We will use the notations $L(\Gamma),\ L(\Gamma'),\ L'(\Gamma), \ L'(\Gamma')$.
Let $\q':Char(\Gamma')\to\Q$ be the weight function of
$\Gamma'$ defined similarly as in (\ref{eq:q}).
(We may use the following convention for the ordering of the indices: if $j\not=j_0$, then
$j<j_0<j_{new}$.)
The following facts can be verified:

\begin{bekezdes}\label{bullet:1}
 Consider the maps $\pi_*:L(\Gamma')\to
L(\Gamma)$ defined by $\pi_*(\sum x_jE_j+x_{new}E_{new})=\sum
x_jE_j$, and $\pi^*:L(\Gamma)\to L(\Gamma') $  defined by
$\pi^*(\sum x_jE_j)=\sum x_jE_j +x_{j_0}E_{new}$. Then
$(\pi^*x,x')_{\Gamma'}=(x,\pi_*x')_\Gamma$.  This shows that
$(\pi^*x,\pi^*y)_{\Gamma'}=(x,y)_\Gamma$ and $(\pi^*x,E_{new})_{\Gamma'}=0$ \ for any
$x,y\in L(\Gamma)$. Both $\pi_*$ and $\pi^*$ extend over $L\otimes \Q$ and $L'$.
\end{bekezdes}

\begin{bekezdes}\label{bullet:2} Set the  (nonlinear)
map: $c:L'(\Gamma)\to L'(\Gamma')$,
$c(l'):=\pi^*(l')+E_{new}$. Then $c(Char(\Gamma))\subset
Char(\Gamma')$ and $c$  induces an isomorphism between the orbit
spaces $Char(\Gamma)/2L(\Gamma)\simeq Char(\Gamma')/2L(\Gamma')$.
Moreover, for any $k\in Char(\Gamma)$ and
$k':=c(k)$ one has $\q'(k')=\q(k)$.
\end{bekezdes}

\begin{bekezdes}\label{bullet:3}
$\pi_*(Char(\Gamma'))\subset Char(\Gamma)$. For any $k'\in Char(\Gamma')$ write
$k:=\pi_*(k')$. Then $k'=\pi^*(k)+aE_{new}$ for some odd integer $a$, and
$\q'(k')-\q(k)=\frac{a^2-1}{8}\in \Z_{\geq 0}$.
\end{bekezdes}

The maps $c$ and $\pi_*$ can be extended to the level of cubes and complexes
as follows.

\begin{bekezdes}
For any $q\geq 0$ and $\square=(k',I)\in \calQ_q(\Gamma')$ one defines
$\pi_*((k',I)):= (\pi_*(k'),I)\in \calQ_q(\Gamma)$, provided that $j_{new}\not\in I$.
By (\ref{bullet:3}) one has $w_{\Gamma'}(\square)-w_\Gamma(\pi_*(\square))\geq 0$.
This defines a homological morphism $\pi^h_*:\calF_q(\Gamma')\to \calF_q(\Gamma)$ by
\begin{equation}
\pi^h_*(\square)=\left\{
\begin{array}{ll}
U^{w_{\Gamma'}(\square)-w_\Gamma(\pi_*(\square))} \, \pi_*(\square) & \mbox{if $j_{new}\not\in I$},\\
0 & \mbox{else}.\end{array}
\right.
\end{equation}
Using (\ref{eq:partial}) one verifies that $\pi_*^h\circ \partial =\partial \circ \pi_*^h$, hence $\pi_*^h$ is
morphism of homological complexes. In particular, its dual $\pi_*^c:\calF^q(\Gamma)\to \calF^q(\Gamma')$,
defined by $\pi_*^c(\phi)=\phi\circ \pi_*^h$, satisfies $\pi_*^c\circ \delta =\delta \circ \pi_*^c$, hence it is a
morphism of complexes as well.
\end{bekezdes}

\begin{bekezdes}
Before we extend $c$ to the level of complexes, notice that for any $k\in Char(\Gamma)$
\begin{equation*}
c(k+2E_j)=\left\{\begin{array}{ll}
c(k)+2E_j & \mbox{if $j\not=j_0$},\\
c(k)+2E_j+2E_{new} & \mbox{if $j=j_0$}.\end{array}\right.\end{equation*}
In particular, the pair $c(k)$ and $c(k+2E_{j_0})$ does not form a 1--cube in $\Gamma'$. Hence,
the application $(k,I)\mapsto (c(k),I)$, sending the vertex $k+2\sum_{j\in I'}E_j$ into
$c(k)+2\sum_{j\in I'}E_j$,
{\it does not} commute with the boundary operator.
The `right' operator  $c^h:\calF_q(\Gamma)\to \calF_q(\Gamma')$ is defined by
\begin{equation}
c^h((k,I))=\left\{\begin{array}{ll}
(c(k),I) & \mbox {if $j_0\not\in I$},\\
(c(k),I)+U^{w(k,I)-w(k+2E_{j_0},I_0)}(c(k)+2E_{j_0},I_0\cup j_{new}) & \mbox{if $I=I_0\cup j_{0}$, \
 $j_0\not\in I_0$}.\end{array}\right.
\end{equation}
In fact, by (\ref{eq:for1}) below,  $w(k+2E_{j_0},I_0)$ above can be replaced by
$w' (c(k)+2E_{j_0},I_0)$, or even by  $w' (c(k)+2E_{j_0},I_0\cup j_{new})$.
By (\ref{bullet:2}) and a computation one gets (where in the third line
$I_0'$ is any subset of $\calj$ with $j_0\not\in I_0'$):
\begin{equation}
\begin{array}{rl}
 w((k,I))= & w'((c(k),I)),\label{eq:for1}\\
w((k+2E_{j_0},I_0))=&w'((c(k)+2E_{j_0},I_0)),\\
\q'(c(k)+2\sum_{j\in I'_0}E_j+2E_{j_0})=&
\q'(c(k)+2\sum_{j\in I'_0}E_j+2E_{j_0}+2E_{new}).\end{array}\end{equation}
 These and a (longer) computations shows that
$c^h$ commutes with the boundary operator $\partial$.
\end{bekezdes}

\begin{bekezdes}
Using $\pi_*\circ c$, the definitions and the first equation of (\ref{eq:for1}) one gets
$\pi_*^h\circ c^h=id_{\calF_*(\Gamma)}$. On the other hand,
$c^h\circ \pi_*^h$ is not the identity, but it is homotopic
to $id_{\calF_*(\Gamma')}$. Indeed, we define the homotopy operator $K:\calF_*(\Gamma')\to
\calF_{*+1}(\Gamma')$ as follows. Write any $k$ as $c\pi_*(k)+2aE_{new}$ for some $a\in \Z$.
Then define $K((k,I))$ as 0 if either  $j_{new}\in I$ or $a=0$. Otherwise take for $K((k,I))$:
\begin{equation*}
sign(a)\cdot \sum \, U^{w(k,I)-w(c\pi_*(k)+2lE_{new},I\cup j_{new})}\, (c\pi_*(k)+2lE_{new},I\cup j_{new}),
\end{equation*}
where the summation is over $l\in \{0,1,\ldots,a-1\}$ if $a>0$ and
$l\in\{a,\ldots, -1\}$ if $a<0$. (The exponents are non--negative because of (\ref{bullet:3}).)
Then, again by a computation,  $\partial \circ K-K\circ \partial=id-c^h\circ \pi_*^h$.

In particular, $\pi_*^h$ and $c^h$ induce (degree preserving)
isomorphisms of the corresponding lattice cohomologies. In the sequel the operator $\pi_*^h$
will be crucial.
\end{bekezdes}

\section{Comparing $\Gamma$ and $\Gj$}\label{s:4}

\subsection{Notations, remarks}\label{ss:notrem} We consider a non--degenerate graph $\Gamma$ as
in \ref{3.1}, and we fix one of its vertices
$j_0\in \calj$. The new graph
$\Gj$ is  obtained from $\Gamma$ by deleting the vertex $j_0$ and all its adgacent
edges. We will denote by $L(\Gamma), L'(\Gamma), L(\Gj), L'(\Gj)$ the corresponding lattices.

The operator $i:L(\Gj)\to L(\Gamma)$, $i(E_{j,\Gj})=E_{j,\Gamma}$ identifies $L(\Gj)$ with a sublattice of
$L(\Gamma)$. The dual operator (restriction) is $R:L'(\Gamma)\to L'(\Gj)$, $R(E^*_{j,\Gamma})=E^*_{j,\Gj}$ for
$j\not=j_0$ and  $R(E^*_{j_0,\Gamma})=0$. It satisfies $(i_\Q(x),y)_\Gamma=(x,R(y))_{\Gj}$ for any
$x\in L'(\Gj)$ and $y\in L'(\Gamma)$. (Here, $E^*_{j,\Gamma}$ respectively $E^*_{j,\Gj}$ are the usual dual generators of $L'$
considered in $\Gamma$, respectively in $\Gj$.)

Recall  that $l'=\sum_ja_jE^*_{j,\Gamma}$ is characteristic if and only if
$a_j\equiv e_j\ (mod\ 2)$. In particular,
$R$ sends characteristic elements into characteristic elements. On the other hand, $i_\Q$ not necessarily preserves
characteristic elements.

Although $R(Char(\Gamma))\subset Char(\Gj)$, this operator cannot be extended 
to the level of cubes. Indeed, notice that
\begin{equation}\label{eq:R}
R(E_{j_0})=-\ \sum _{(j,j_0)\in \cale_\Gamma} \ E^*_{j,\Gj},
\end{equation}
where the sum runs over the adjacent vertices of $j_0$ in $\Gamma$. In particular, the endpoints of the 1--cube
$(k,j_0)$ are sent into $R(k)$ and $R(k)+2R(E_{j_0})$,  which cannot be expressed as combination of
1--cubes in $\Gj$.
In the next subsection we will consider another operator, which can be extended to the level of
cubes (and which, in fact, operates in a different direction than $R$, cf. \ref{2.4}).

\subsection{The $B$--operator.} Consider $b:L'(\Gj)\to L'(\Gamma)$ defined by
$$\sum_ja_jE^*_{j,\Gj}\mapsto \sum_ja_jE^*_{j,\Gamma}.$$
Clearly, if $k\in Char(\Gj)$ then $b(k)+a_{j_0}E^*_{j_0,\Gamma}\in Char(\Gamma)$ for any
$a_{j_0}$ with $a_{j_0}\equiv e_{j_0} \ (mod\ 2)$. In order to see how it operates on cubes, notice that
in $\Gj$ for any $j\in\calj(\Gj)$ one has
$$E_{j,\Gj}=-e_j\, E^*_{j,\Gj}-\sum_{(i,j)\in\cale_{\Gj}}\, E^*_{i,\Gj},$$
and a similar relation holds in $\Gamma$ too. Therefore, one gets that
\begin{equation}\label{eq:shift}
b(E_j)=E_j+ (E_{j_0},E_j)_\Gamma \cdot E^*_{j_0,\Gamma}.
\end{equation}
Therefore,
\begin{equation}\label{biq}
b(l')=i_\Q(l')+(E_{j_0},i_\Q(l'))_\Gamma\cdot E^*_{j_0,\Gamma}.\end{equation}
In particular, $b$ is a `small' modification of $i_\Q$, but this modification is enough to
extend it to the level of cubes. Although the vertices of $(k,I)$ are not sent to the vertices of a cube
(provided that $I$ contains elements adjacent to $j_0$ in $\Gamma$), cf. (\ref{eq:shift}),
nevertheless if we consider all the shifts in the direction of the `error' of (\ref{eq:shift}) we get
 a well--defined  operator
$$B_q:\calF_q(\Gj)\to \calF_q(\Gamma)$$
given by
\begin{equation}\label{biq2} B_q((k,I)):=\sum_{a_{j_0}\equiv e_{j_0}\, (mod\, 2)}\
(b(k)+a_{j_0}E^*_{j_0,\Gamma}, I).\end{equation}
(Here we keep the notation `$B$', since this notation was used in similar contexts, as in
\cite{OSzP,NOSZ,JG}.)

$B_q$ induces a morphism $B^q:\calF^q(\Gamma)\to \calF^q(\Gj)$ via $(B^q\phi)(\square)=\phi(B_q\square)$. A straightforward (slightly long) computation shows that $B_q$ commutes with the boundary operator $\partial$, hence
$B^*\circ \delta=\delta\circ B^*$ too. In particular, one gets a well--defined morphism of $\Z[U]$--modules
$\bB^*:\bH^*(\Gamma)\to \bH^*(\Gj)$.

\subsection{The sign--modified $B$--operator.}\label{4.3}
 In the exact sequences considered in the next section we will
need to modify the $B$--operator by a sign (compare with the end of \S 2 of \cite{OSzP}, where
the case $q=0$ is discussed). The definition depends on some choices.

For each $L$--orbit $[k]:=k+2L(\Gj)\subset Char(\Gj)$ we will fix a representative $r_{[k]}\in [k]$.
Hence, for any characteristic element $k$, with orbit $[k]$, $k-r_{[k]}\in 2L(\Gj)$, hence
$(k-r_{[k]},E^*_{j,\Gj})\in 2\Z$ for any $j\not=j_0$. In particular,
$(k-r_{[k]},R(E_{j_0}))\in 2\Z$ for $R(E_{j_0})$ defined in (\ref{eq:R}).
 Then  the modified operator
$$\overline{B}_q:\calF_q(\Gj)\to \calF_q(\Gamma)$$
is given by
\begin{equation}\overline{B}_q((k,I)):=\sum_{a_{j_0}\equiv e_{j_0}\, (mod\, 2)}\
(-1)^{n(k,a_{j_0})/2} \cdot
(b(k)+a_{j_0}E^*_{j_0,\Gamma}, I),\end{equation}
where $$n(k,a_{j_0}):= (k-r_{[k]},R(E_{j_0}))+a_{j_0}+e_{j_0}.$$
Then, again, $\overline{B}^*$ commutes with the boundary operator, hence
$\overline{B}^*:\calF^q(\Gamma)\to \calF^q(\Gj)$  defined by $(\overline{B}^q\phi)(\square)=
\phi(\overline{B}_q\square)$ commutes with $\delta$, hence it also defines a  $\Z[U]$--modules
morphism $\overline{\bB}^*:\bH^*(\Gamma)\to \bH^*(\Gj)$.

\begin{remark}
The morphisms $\bB^*$ and $\overline{\bB}^*$ do {\it not} preserve the gradings {\it neither} the
orbits $Char/2L$ (i.e. they do not split in direct sum with respect to these orbits).
\end{remark}

\section{The surgery exact sequence}

\subsection{Notations} Let $\Gamma$ be a non--degenerate graph  as above with $s=|\calj|\geq 2$.
We fix a vertex $j_0\in \calj$. Associated with $j_0$ we consider two other graphs, namely $\Gj$ and
$\Gjp$. The second one is obtained from $\Gamma$ by modifying the decoration $e_{j_0}$ of the vertex $j_0$
into $e_{j_0}+1$. In order to stay in our category of objects, we  will assume that both graphs
$\Gj$ and $\Gjp$ are non--degenerate.

Our goal is to establish  a long exact sequence  connecting the lattice cohomologies of
these three graphs, similar which is valid for the Heegaard--Floer cohomologies provided by
surgeries. As usual, in order to define a long exact sequence, we need first to determine a short exact
sequence of complexes.

The graphs $\Gamma$ and $\Gj$ will be connected by the $\overline{B}$--operator,
cf. \ref{4.3}. We define the `$A$'--operator connecting $\Gamma$
and $\Gjp$ as follows. Let $\Gamma^b$ be the graph obtained from $\Gamma$ by attaching a new vertex $j_{new}$ (via
a single new  edge) to $j_0$, such that the decoration of the new vertex is $-1$. Then, for the pair
$\Gamma$, $\Gamma^b$ (since $\Gamma=\Gamma^b\setminus j_{new}$), we can apply the construction and results
of  \S\ref{s:4}. In particular, we get  morphisms of complexes:
$B_*:\calF_*(\Gamma)\to \calF_*(\Gamma^b)$ and
$B^*:\calF^*(\Gamma^b)\to \calF^*(\Gamma)$.
Next, since $\Gjp$ obtained from $\Gamma^b$ by blowing down the new vertex $j_{new}$, by \S\,\ref{s:3}
we get morphisms of complexes:
$\pi_*^h:\calF_*(\Gamma^b)\to \calF_*(\Gjp)$ and
$\pi_*^c:\calF^*(\Gjp)\to \calF^*(\Gamma^b)$.
By composition we get  the $A$--operators:
$$A_*:=\pi_*^h\circ B_*: \calF_*(\Gamma)\to  \calF_*(\Gjp), \ \mbox{and } \
A^*:=B^*\circ \pi_*^c:\calF^*(\Gjp)\to \calF^*(\Gamma).$$
In particular, we can consider the short sequence of complexes
\begin{equation}\label{eq:SS}
0\to \calF^*(\Gjp)\stackrel{A^*}{\longrightarrow} \calF^*(\Gamma)
\stackrel{\overline{B}^*}{\longrightarrow}\calF^*(\Gj)\to 0.
\end{equation}
\begin{theorem}\label{Th1}
The short sequence of complexes (\ref{eq:SS}) is exact.
\end{theorem}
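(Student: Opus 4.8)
The plan is to prove exactness degreewise: for each $q$ it suffices to check that the sequence of $\Z[U]$--modules
$$0\to \calF^q(\Gjp)\stackrel{A^q}{\longrightarrow} \calF^q(\Gamma)\stackrel{\overline{B}^q}{\longrightarrow}\calF^q(\Gj)\to 0$$
is exact, since $A^*$ and $\overline{B}^*$ are already known to commute with $\delta$. I stress that I will not try to preserve the grading: by the Remark the maps shift degrees, so everything below lives at the level of $\Z[U]$--modules. The organizing device is the decomposition of $\calQ_q(\Gamma)$ into \emph{columns}. Grouping a cube $(m,I)$ with $j_0\notin I$ together with all cubes obtained by changing only the $j_0$--coordinate of $m$, and using that $R\circ b=\mathrm{id}$ while $(k,a_{j_0})\mapsto b(k)+a_{j_0}E^*_{j_0,\Gamma}$ is a bijection from $Char(\Gj)\times\{a_{j_0}\equiv e_{j_0}\}$ onto $Char(\Gamma)$, the columns with $j_0\notin I$ are indexed exactly by the cubes $(k,I)\in\calQ_q(\Gj)$. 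In these terms $(\overline{B}^q\phi)((k,I))$ is the signed column sum $\sum_{a_{j_0}}(-1)^{n(k,a_{j_0})/2}\,\phi\big((b(k)+a_{j_0}E^*_{j_0,\Gamma},\,I)\big)$, which is finite because $\phi$ has finite support and which is insensitive to all cubes with $j_0\in I$.

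Next I would make $A^q=B^q\circ\pi_*^c$ explicit. Viewing $\Gamma^b$ as the blow--up of $\Gjp$ at $j_0$, the computations of \S\ref{s:3} give $\pi_*(E^*_{new})=E^*_{j_0,\Gjp}$ and $\pi_*(b(m))$ equal to $m$ read in the coordinates of $\Gjp$; hence for $j_{new}\notin I$ the element $\pi_*\big(b(m)+aE^*_{new}\big)$ has unchanged non--$j_0$ coordinates and $j_0$--coordinate shifted by $a$. Since $\pi_*^h$ kills cubes with $j_{new}\in I$, writing $\square_a:=(b(m)+aE^*_{new},I)$ I obtain the closed form
$$(A^q\Psi)((m,I))=\sum_{a\ \mathrm{odd}}U^{\,w_{\Gamma^b}(\square_a)-w_{\Gjp}(\pi_*\square_a)}\,\Psi\big((\,\pi_*(b(m)+aE^*_{new}),\,I\,)\big),$$
a finite, \emph{unsigned} sum in which, as $a$ runs over the odd integers, the $j_0$--coordinate of the argument runs bijectively over a column of $\Gjp$, and the exponents are $\geq0$ by \ref{bullet:3}. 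I also record that $\pi_*^c=\Hom(\pi_*^h,\et^+_0)$ is a \emph{split} injection, because $\pi_*^h\circ c^h=\mathrm{id}$ exhibits $\pi_*^h$ as a split surjection.

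With these normal forms in place I would verify the three exactness statements. Surjectivity of $\overline{B}^q$ is easiest: given a finitely supported $\psi$, for each cube $(k,I)$ in its support I place the single value $(-1)^{n(k,a^0_{j_0})/2}\psi((k,I))$ on one chosen cube $(b(k)+a^0_{j_0}E^*_{j_0,\Gamma},I)$ of the corresponding column and $0$ elsewhere; since $\big((-1)^{n/2}\big)^2=1$ the resulting $\phi$ is finitely supported with $\overline{B}^q\phi=\psi$. Injectivity of $A^q$ reduces, via the split injectivity of $\pi_*^c$, to injectivity of $B^q$ on $\im\pi_*^c$, which follows from the closed form by a leading--term argument exploiting the finiteness of the support of $\Psi$ --- precisely the point where compactly supported cochains (cf. \S\ref{2.4}) are essential: the exponents $w_{\Gamma^b}(\square_a)-w_{\Gjp}(\pi_*\square_a)$ are nonnegative and convex in $a$, so the system $(A^q\Psi)=0$ is banded with respect to the $U$--filtration and the ordering of columns by $j_0$--coordinate, and peeling off the extreme nonzero value of $\Psi$ forces $\Psi=0$.

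The heart of the matter, and the step I expect to be the main obstacle, is exactness in the middle, in particular the vanishing $\overline{B}^q\circ A^q=0$ over $\Z$. Substituting the two normal forms and collecting terms according to the \emph{final} $j_0$--coordinate $t$ of the $\Gjp$--argument, $(\overline{B}^qA^q\Psi)((k,I))$ becomes, for each fixed $t$, a single value of $\Psi$ multiplied by the scalar $\sum_{a_{j_0}}(-1)^{n(k,a_{j_0})/2}\,U^{\,e(a_{j_0},t)}$, where $e(a_{j_0},t)$ is the blow--down weight difference at $a=t-a_{j_0}$. Everything thus reduces to showing that this alternating sum of $U$--powers vanishes. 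The sign $(-1)^{n(k,a_{j_0})/2}$ flips at each step of $2$ in $a_{j_0}$ (since $n$ increases by $2$), while the exponents $e(a_{j_0},t)$ form a convex integer sequence with constant--slope branches; the alternating sum over such a sequence telescopes to boundary terms that cancel. This is exactly where the sign modification defining $\overline{B}$ (rather than $B$) does its work, and where Greene's $\Z_2$--counting in \cite{JG} must be replaced by a genuine signed cancellation. Finally, for $\ker\overline{B}^q\subseteq\im A^q$ I would argue constructively: given $\phi$ whose every signed column sum vanishes, the same convexity/triangularity that yields injectivity lets me solve $(A^q\Psi)=\phi$ column by column, the vanishing of the column sums being precisely the compatibility condition making the banded system solvable within finitely supported cochains. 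Assembling injectivity, surjectivity and the two inclusions yields the exactness of \eqref{eq:SS}.
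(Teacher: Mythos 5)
Your overall framework coincides with the paper's (columns of cubes indexed by $\calQ_q(\Gj)$, the closed form for $A^q$ coming from $B$ followed by blow--down, the one--cube--per--column construction for surjectivity of $\overline{B}^q$, and the reduction of $\overline{B}^q\circ A^q=0$ to the vanishing of scalar sums attached to a fixed target $j_0$--coordinate $t$: all of this matches \ref{sur} and \ref{ss}). But the two steps you yourself call the heart of the matter rest on principles that are false in general. For $\overline{B}^q\circ A^q=0$: since $j_0\notin I$ for a cube of $\Gj$, the exponents in your scalar sum are exactly $e(c,t)=\bigl((t-c)^2-1\bigr)/8$, which is quadratic — \emph{not} ``convex with constant--slope branches'' — and the principle ``an alternating sum of $U$--powers over a convex sequence telescopes to zero'' is simply untrue: for the convex, constant--slope profile $E(i)=|i|$ with alternating signs one gets $1-2U+2U^2-\cdots$, which is nonzero on $\et^+_0$, because equal exponent values sit an \emph{even} number of sign--flips apart. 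The actual mechanism (the paper's \ref{ss}) is a pairing, not a telescope: each value of $(a^2-1)/8$ ($a=t-c$ odd) is attained exactly twice, at $a$ and $-a$, the two corresponding $c$'s differ by $2a$, and since $a$ is \emph{odd} and the sign $(-1)^{n(k,c)/2}$ flips once per step of $2$ in $c$, the two terms carry opposite signs and cancel. This uses the precise triangular--number arithmetic of the exponents; with only the structural hypotheses you state, the sum need not vanish.

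The second gap concerns injectivity of $A^q$ and $\ker\overline{B}^q\subset \im\, A^q$. Knowing that each row of the system is banded (for each cube $(m,I)$ of $\Gamma$ the set $\{a:\ e_m(a)=0\}$ is a short interval, here contained in $\{-1,1,3\}$) does \emph{not} justify the peeling or the column--by--column solving: the system whose rows are $\Psi(i-1)+\Psi(i)=0$ for even $i$ and $\Psi(i)+\Psi(i+1)=0$ for odd $i$ is banded in your sense, yet the finitely supported $\Psi$ with $\Psi(0)=1$, $\Psi(-1)=-1$ and $\Psi=0$ elsewhere lies in its kernel. What rules this out is a structural fact about how the stencil varies \emph{along a column} of $\Gamma$: writing $k(i)=k+2iE^*_{j_0,\Gamma}$, the quantity $\max_{I'\ni j_0}\{\sum_{j\in I'}a_j-(E_{I'},E_{I'})\}$ increases by $2$ per step while $\max_{I'\not\ni j_0}\{\cdots\}$ stays constant, so conditions (\ref{eq:E2}) and (\ref{eq:E2b}) switch exactly once: rows point left below a single threshold $i_0$, are (at most) tridiagonal there, and point right above it. This monotonicity — the content of the paper's analysis in \ref{sINJ} around (\ref{eq:E2}), (\ref{eq:E2b}), in particular the observation that (\ref{eq:E2b}) fails for $k+2E^*_{j_0,\Gamma}$ once (\ref{eq:E2}) holds for $k$ — is the key lemma your proposal is missing, and it does not follow from nonnegativity or convexity of $e_m(a)$ in $a$. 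Finally, your solving step also tacitly requires reducing to cochains killed by $U$ and then inducting over the $U$--filtration (the paper's (\ref{ker})); you allude to this (``banded with respect to the $U$--filtration'') but it needs to be carried out, although that part is routine compared with the structural lemma above.
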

The proof is given in several steps.

\subsection{The injectivity of $\mathbf{A^q}$}\label{sINJ}
 Take an arbitrary  non--zero $\phi\in \calF^q(\Gjp)$. In order to prove
that $A^q\phi\not=0$, we need to find $f\in \calF_q(\Gamma)$ such that $\phi(A_q(f))\not=0$.
Let $N$ be the smallest non--negative integer such that $U^{N+1}\phi=0$. Then replacing $\phi$ by $U^N\phi$ we may assume that $U\phi=0$. In particular, in $f$ (or in $A_q(f)$) any term whose coefficient has the form
$U^n$ ($n>0$) is irrelevant.

Since $\phi\not=0$, there exists $(\bar{k},I)\in \calQ_q(\Gjp)$ with $\phi((\bar{k},I))\not=0$. Write
\begin{equation}\label{bark}
\bar{k}=\sum_{j\in\calj}a_jE^*_{j,\Gjp}+E^*_{j_0,\Gjp},
\end{equation}
with $a_j-e_j$ even for all $j\in\calj$.
Since $\phi$ is finitely supported, we may assume that
\begin{equation}\label{eq:min}
\mbox{$a_{j_0}$ is minimal with the property $\phi((\bar{k},I))\not=0$.}
\end{equation}
We wish to construct $f\in \calF_q(\Gamma)$ such that $\phi(A_q(f)-(\bar{k},I))=0$.
For the weight functions
in $\Gamma^b$ and $\Gjp$ we will use the notations $\q^b$ respectively $\q^+$.

First, set \begin{equation}\label{k}
k:=\sum_ja_jE^*_{j,\Gamma}\in Char(\Gamma).\end{equation}
 Then
$$B_0(k)=\sum_{a\equiv 1}k_a^b, \ \ \mbox{where} \ \
k_a^b=\sum_ja_jE^*_{j,\Gamma^b}+aE^*_{new}.$$
Set also
$$k_a:=\pi_*(k_a^b)=\sum_ja_jE^*_{j,\Gjp}+aE^*_{j_0,\Gjp}.$$
Notice that $k_1=\bar{k}$. Since $\pi^*k_a=
k^b_a-aE_{new}$, by
(\ref{bullet:3})
\begin{equation}\label{eq:qq}
\q^b(k^b_a)=\q^+(k_a)+(a^2-1)/8.\end{equation}
Moreover, $B((k,I))=\sum_{a\equiv 1}(k^b_a,I)$ and $\pi_*((k^b_a,I))=(k_a,I)$.

For any $I'\subset I$ write $E_{I'}=\sum_{j\in I'}E_j$. Let $\delta_{I'}$ be 1 if $j_0\in I'$, and zero
otherwise. Since
\begin{equation}\label{I'1}
\q^b(k^b_a+2E_{I'})-\q^b(k^b_a)=\frac{1}{2}\sum_{j\in I'}a_j-\frac{1}{2}(E_{I'},E_{I'})_{\Gamma^b},
\end{equation}
and
\begin{equation}\label{I'2}
\q^+(k_a+2E_{I'})-\q^+(k_a)=\frac{1}{2}\sum_{j\in I'}a_j +\frac{1}{2}\delta_{I'}a-\frac{1}{2}(E_{I'},E_{I'})_{\Gjp},
\end{equation}
we get
\begin{equation}\label{I'3}
\q^b(k^b_a+2E_{I'})-\q^+(k_a+2E_{I'})=\frac{a^2-1}{8}-\delta_{I'}\cdot \frac{a-1}{2}.
\end{equation}
Assume that $I\not\ni j_0$. Then, by (\ref{I'3}), we get that $w^b(k^b_a,I)-w^+(k_a,I)=(a^2-1)/8$, hence
\begin{equation}\label{A}
A_q((k,I))=\sum_{a\equiv 1, \, a<0} U^{\frac{a^2-1}{8}}(k_a,I)+(\bar{k},I)+
\sum_{a\equiv 1, \, a\geq 3} U^{\frac{a^2-1}{8}}(k_a,I).\end{equation}
Both  sums are killed by $\phi$, the first one  by (\ref{eq:min}), the second one by  $U\phi=0$, hence
$\phi(A_q((k,I))-(\bar{k},I))=0$.

Next, assume that $I\ni j_0$. In that case, again by
(\ref{I'3}), we get that
$w^b(k^b_a,I)-w^+(k_a,I)>0$  whenever $a\not\in\{-1,1,3\}$.
For the other three special values we have the following facts.
It is convenient to set  for each $k$ (represented as in (\ref{k})) the integer:
\begin{equation}
M(k):=\max_{I'}\, \{\,\sum_{j\in I'}a_j-(E_{I'},E_{I'})_{\Gamma^b}\,\}.
\end{equation}

$\bullet$ For $a=1$ one has $w^b(k^b_a,I)-w^+(k_a,I)=0$ always.

$\bullet$ If $a=3$, then the right hand side of (\ref{I'3}) is $1-\delta_{I'}$, hence
$w^b(k^b_a,I)-w^+(k_a,I)=0$ if and only if
$\max_{I'}\{\, \q^b(k^b_a+2E_{I'})\,\}$ can be realized by some $I'$ with $I'\ni j_0$. By (\ref{I'1})
this is equivalent to
\begin{equation}\label{eq:E2}
\mbox{$M(k)$ can be realized by some $I'$ with $I'\ni j_0$.}
\end{equation}

$\bullet$ If $a=-1$, then  the right hand side of (\ref{I'3}) is $\delta_{I'}$, hence
 $w^b(k^b_a,I)-w^+(k_a,I)=0$ if and only if
\begin{equation}\label{eq:E2b}
\mbox{$M(k)$ can be realized by some $I'$ with $I'\not\ni j_0$.}
\end{equation}


Assume that in the case $I\ni j_0$
for $a=3$ one has $w^b(k^b_a,I)-w^+(k_a,I)>0$. Then by an identical argument as in the case
$I\not\ni j_0$ we get that $\phi(A_q((k,I))-(\bar{k},I))=0$.

Hence, the remaining final case is when  there exists at least one $I'\subset I$ which contains $j_0$ and
 satisfies (\ref{eq:E2}). Then
 $$A_q((k,I)=(\bar{k},I)+(\bar{k}+2E^*_{j_0,\Gjp},I) \ \ \ (mod\ U \ \mbox{and} \ \ker(\phi)).$$
Now, we will consider $k+2E^*_{j_0,\Gamma}$ instead of $k$.  Via the operator $A_q$
it  provides the same cubes as $k$  (where the index set will have a shift $a\mapsto a-2$)
 but with different $U^m$--coefficients. Notice that by (\ref{eq:E2}) and (\ref{eq:E2b}),
 $M(k+2E^*_{j_0,\Gamma})$ can be realized only by subsets $I'$ with $I'\ni j_0$, hence  (\ref{eq:E2b}) will fail.
Therefore,
 $$A_q((k+2E^*_{j_0,\Gamma},I)=(\bar{k}+2E^*_{j_0,\Gjp},I)+(\bar{k}+4E^*_{j_0,\Gjp},I) \ \ \ (mod\ U).$$
More generally, for any positive integer $\ell$, by the same argument:
 $$A_q((k+2\ell E^*_{j_0,\Gamma},I)=(\bar{k}+2\ell
 E^*_{j_0,\Gjp},I)+(\bar{k}+(2\ell+2)E^*_{j_0,\Gjp},I) \ \ \ (mod\ U).$$
 Since $\phi$ is finitely supported, $\phi((2\ell+2)E^*_{j_0,\Gjp},I))=0$ for some $\ell\geq 0$, let us consider the minimal such $\ell$.
 Then $A_q$, modulo $U$ and $\ker(\phi)$, restricted on  the relevant finite dimensional spaces looks as an
 $(\ell+1)\times(\ell+1)$ upper triangular matrix whose diagonal is the identity matrix. Since this is invertible over $\Z$, the result follows: some linear combinations of the elements $A_q((k+2t E^*_{j_0,\Gamma},I)$, $0\leq t\leq \ell$, is
 $(\bar{k},I)$ modulo $U$ and $\ker(\phi)$.

\subsection{The surjectivity of $\mathbf{\overline{B}^q}$}\label{sur} We provide the same argument as \cite{JG}:
For any fixed $a\equiv e_{j_0} \ (mod\ 2)$,
$$\overline{B}^q\big(U^{-\ell}*(b(k)+aE^*_{j_0,\Gamma},I)^\veeK\big)=\pm U^{-\ell}*(k,I)^\veeK.$$
Since the collection of $U^{-\ell}*(k,I)^\veeK$ generate  $\calF^q(\Gj)$ (over $\Z$), the
surjectivity follows.

\subsection{$ \mathbf{\overline{B}^q\circ A^q=0}$}\label{ss} Take an
arbitrary $(k,I)\in \calQ_q(\Gj)$. Then, by (\ref{I'3}), one has:
$$(A_q\circ \overline{B}_q)((k,I))=\sum_{a\equiv 1}\ \sum_{\ \ c\equiv e_{j_0}}\  (-1)^{N(k)+\frac{c+e_{j_0}}{2}}\cdot
U^{\frac{a^2-1}{8}}\ \big(\pi_*b(k)+(a+c)E^*_{j_0,\Gjp},I\big).
$$
Those pairs $(a,c)$ for which $a^2-1$ and $a+c$ are fixed hit the same element of $\calF_q$. Write $a=2i+1$.
Then the two solutions
 of $(a^2-1)/8=i(i+1)/2=t$ satisfies $i_1+i_2=-1$. Since the corresponding two $c$ values
  satisfies $2i_1+c_1=2i_2+c_2$, one gets that $(c_2-c_1)/2$ is odd. Hence the terms cancel each other two
by two.

\subsection{$\mathbf{\ker \overline{B}^q\subset im\, A^q}$} Set $\ker U^m:=\{\phi\in\calF^q(\Gamma)\,:\, U^m\phi=0\}$.
Notice that the inclusion
\begin{equation}\label{ker}
\ker U^m\cap \ker \overline{B}^q\subset im\, A^q
\end{equation}
for $m=1$ (together with (\ref{ss}))
implies by induction its validity for any $m$ (cf. \cite{JG}).
Indeed, assume that the inclusion is true for $m-1$ and set $\phi\in \ker U^m\cap \ker \overline{B}^q$.
Then $U\phi=A^q(\psi)$ for some $\psi$. Moreover, $\tilde{\phi}:=\phi-A^q(U^{-1}*\psi)\in\ker U$. On the other hand, by (\ref{ss}), $\tilde{\phi}\in \ker \overline{B}^q$ too. Therefore, by (\ref{ker}) applied for $m=1$, we get
$\tilde{\phi}\in im \, A^q$, hence $\phi\in im\, A^q$ too.

Notice that $\cup_m(\ker U^m\cap \ker\overline{B}^q)=\ker \overline {B}^q$, hence this would prove
$\ker \overline{B}^q\subset im\, A^q$ too.

Next, we show (\ref{ker}) for $m=1$. First notice that
$\ker U\cap \ker \overline{B}^q$ is generated by elements of type
$(k,I)^\veeK+(k+2E^*_{j_0,\Gamma},I)^\veeK$ where $I\not\ni j_0$, and
$(k,I)^\veeK$ where $I\ni j_0$.

In the first case (i.e. $I\not\ni j_0$), let us write $k$ as in (\ref{k}), and set $\bar{k}$ as in (\ref{bark}).
Then by (\ref{I'3}) and (\ref{A}) one has
$$ A_q((k,I))=(\bar{k}-2E^*_{j_0,\Gjp},I)+(\bar{k},I) \ \ (mod\ U),$$
$$ A_q((k+2E^*_{j_0,\Gamma},I))=(\bar{k},I)+(\bar{k}+2E^*_{j_0,\Gjp},I) \ \ (mod\ U),$$
and $(\bar{k},I)$ does not appear in any other term with non--zero coefficient ($mod \ U$). Hence
$A^q(\bar{k},I)^\veeK=(k,I)^\veeK+(k+2E^*_{j_0,\Gamma},I)^\veeK$.

Next, we fix an element of type $(k,I)^\veeK$ with $I\ni j_0$. It belongs to the collection
$\{(k(i),I)\}_{a\in\Z}$, where $k(i)=k+2iE^*_{j_0,\Gamma}$, which will be treated simultaneously via the
discussions of (\ref{sINJ}). Write $k$ as in (\ref{k}) and set $\bar{k}$ via (\ref{bark});
it is also convenient to write $\bar{k}(i):=\bar{k}+2iE^*_{j_0,\Gjp}$.
Notice that $k(i)$ has coefficients $\{\{a_j\}_{j\not=j_0},a_{j_0}+2i\}$. Therefore, for $i\ll 0$
(\ref{eq:E2}) will fail  and (\ref{eq:E2b}) is satisfied.
Let $i_0-1$ be maximal when (\ref{eq:E2}) fails. Then for $i=i_0$ both conditions are satisfied, and for
$i>i_0$ only (\ref{eq:E2}). Therefore,
\begin{equation}
\begin{array}{lll}
A_q((k(i),I))=(\bar{k}(i-1),I)+(\bar{k}(i),I)& (mod \ U)& \mbox{if $i<i_0$,}\\
A_q((k(i_0),I))=(\bar{k}(i_0-1),I)+(\bar{k}(i_0),I)+(\bar{k}(i_0+1),I) & (mod \ U)& \\
A_q((k(i),I))=(\bar{k}(i),I)+(\bar{k}(i+1),I) &  (mod \ U)& \mbox{if $i>i_0$.}
\end{array}\end{equation}
This reads as
$$A^q((\bar{k}(i),I)^\veeK)=\left\{\begin{array}{ll}
(k(i),I)^\veeK+ (k(i+1),I)^\veeK & \mbox{if $i<i_0$}\\
(k(i),I)^\veeK                  & \mbox{if $i=i_0$}\\
(k(i),I)^\veeK+ (k(i-1),I)^\veeK & \mbox{if $i>i_0$}\end{array}\right . $$
Taking finite linear combination we get that any $(k(i),I)^\veeK$ is in the image of $A^q$.
This ends the proof of Theorem (\ref{Th1}). As a corollary we get:

\begin{theorem}\label{ES} Assume that the graphs $\Gjp, \ \Gamma$ and $\Gj$ are non--degenerate.
Then
$$\cdots \longrightarrow \bH^q(\Gjp)\stackrel{\bA^q}{\longrightarrow} \bH^q(\Gamma)\stackrel{\overline{\bB}^q}{\longrightarrow} \bH^q(\Gj)\stackrel{\bC^q}{\longrightarrow} \bH^{q+1}(\Gjp)\longrightarrow \cdots$$
is an exact sequence of $\Z[U]$--modules.
\end{theorem}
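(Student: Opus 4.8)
The plan is to derive Theorem \ref{ES} as the standard consequence of the short exact sequence of cochain complexes established in Theorem \ref{Th1}. Since (\ref{eq:SS}) is a short exact sequence of complexes of $\Z[U]$--modules, the associated long exact sequence in cohomology is a formal homological--algebra consequence; the whole content has already been placed in Theorem \ref{Th1}, so this final deduction is essentially bookkeeping, and the main work is to identify the induced maps and the connecting homomorphism correctly.

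First I would recall that $(\calF^*(\Gjp),\delta)$, $(\calF^*(\Gamma),\delta)$ and $(\calF^*(\Gj),\delta)$ are cochain complexes, and that the maps $A^*$ and $\overline{B}^*$ are morphisms of complexes (they commute with $\delta$, as verified in \S\ref{s:3} and \S\ref{4.3}, and $A^*=B^*\circ\pi_*^c$ is a composition of such morphisms). By Theorem \ref{Th1} the sequence (\ref{eq:SS}) is short exact in each cohomological degree $q$. Applying the snake lemma (equivalently, the standard zig--zag construction of the connecting homomorphism) to this short exact sequence of complexes then yields a long exact sequence
$$\cdots\longrightarrow \bH^q(\Gjp)\stackrel{\bA^q}{\longrightarrow}\bH^q(\Gamma)\stackrel{\overline{\bB}^q}{\longrightarrow}\bH^q(\Gj)\stackrel{\bC^q}{\longrightarrow}\bH^{q+1}(\Gjp)\longrightarrow\cdots$$
where $\bA^q$ and $\overline{\bB}^q$ are the maps induced on cohomology by $A^*$ and $\overline{B}^*$, and $\bC^q$ is the connecting homomorphism.

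Next I would verify that every map in the sequence is $\Z[U]$--linear and that the sequence is a sequence of $\Z[U]$--modules. The maps $A^*$ and $\overline{B}^*$ are $\Z[U]$--module morphisms at the chain level, so the induced maps $\bA^q$, $\overline{\bB}^q$ are $\Z[U]$--linear. For the connecting map $\bC^q$ one constructs it concretely: given a cocycle $\phi\in\calF^q(\Gj)$, lift it to some $\tilde\phi\in\calF^q(\Gamma)$ with $\overline{B}^q(\tilde\phi)=\phi$ (possible by surjectivity of $\overline{B}^q$), apply $\delta$ to get $\delta\tilde\phi\in\ker\overline{B}^{q+1}=\im A^{q+1}$, and then pull back through the injective $A^{q+1}$ to obtain $\bC^q([\phi])\in\bH^{q+1}(\Gjp)$. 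Because the lift can be chosen $\Z[U]$--linearly and $\delta$, $A^{q+1}$ commute with the $U$--action, $\bC^q$ is automatically $\Z[U]$--linear; this is the one place where I would double--check that the choice of lift does not interfere with the grading/$U$--module structure.

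The only genuine subtlety—and hence the step I would flag as the main (albeit minor) obstacle—is that the modules $\calF^q$ are not finitely generated over $\Z$, so one should confirm that the standard snake--lemma argument applies verbatim in this infinitely--generated setting. This is not a real difficulty: the snake lemma holds in any abelian category, and $\Z[U]$--modules form one, so the long exact sequence exists with no finiteness hypothesis. I would simply remark that the construction of $\bC^q$ and the exactness at each vertex follow formally, and that the exactness of (\ref{eq:SS}) proved in Theorem \ref{Th1} is exactly the input required. This completes the proof of Theorem \ref{ES}.
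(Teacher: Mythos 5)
Your proposal is correct and coincides with the paper's own argument: the paper deduces Theorem~\ref{ES} directly as a corollary of the short exact sequence of complexes in Theorem~\ref{Th1}, via the standard long exact cohomology sequence in the abelian category of $\Z[U]$--modules. Your additional remarks on the $\Z[U]$--linearity of the connecting homomorphism $\bC^q$ and the irrelevance of finiteness hypotheses are exactly the routine bookkeeping the paper leaves implicit.
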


\section{The exact sequence for negative definite graphs}

\subsection{Preliminaries.}
For any graph  $\Gamma$ we write $\det(\Gamma)$ for the determinant of the {\it negative} of the  intersection form associated with $\Gamma$.
If $\Gamma$ is negative definite then $\det(\Gamma)$ is obviously positive.

If $\Gamma$ is negative definite then $\Gj$ is automatically so for any $j_0$.
Nevertheless, this is not the case for $\Gjp$ (although, if $\Gjp$ is negative definite then
$\Gamma$ is so too).

\begin{lemma}\label{l:int}
Assume that $\Gamma$ is negative definite. Then $\Gjp$ is negative definite if and only if
$\det(\Gamma)>\det(\Gj)$. If these conditions are satisfied then $E^*_{j_0,\Gamma}\not\in L(\Gamma)$
and $(E^*_{j_0,\Gamma})^2\not\in \Z$.
\end{lemma}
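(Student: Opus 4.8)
The plan is to reduce the whole statement to one determinant identity together with a short eigenvalue argument. Let $\mathcal{D}=\mathcal{D}_\Gamma$ denote the positive definite matrix $-(\,,\,)$, the negative of the intersection form of $\Gamma$, so that $\det\mathcal{D}=\det(\Gamma)$, and let $\mathcal{D}'$ be the corresponding matrix for $\Gjp$. Since $\Gjp$ differs from $\Gamma$ only in that the decoration $e_{j_0}$ becomes $e_{j_0}+1$, the $j_0$-th diagonal entry of $\mathcal{D}$ is decreased by $1$; equivalently $\mathcal{D}'=\mathcal{D}-e_{j_0}e_{j_0}^T$, so the $j_0$-th row of $\mathcal{D}'$ is the $j_0$-th row of $\mathcal{D}$ minus the covector $e_{j_0}^T$. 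Expanding by multilinearity in this single row, and then expanding the resulting auxiliary determinant along the same row, I would obtain
$$\det(\Gjp)=\det(\mathcal{D}')=\det(\mathcal{D})-\det(\mathcal{D}_{\widehat{j_0}})=\det(\Gamma)-\det(\Gj),$$
where $\mathcal{D}_{\widehat{j_0}}$ is the principal submatrix with row and column $j_0$ deleted, which is exactly the matrix computing $\det(\Gj)$. This is the arithmetic core of the lemma.

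For the equivalence I would invoke eigenvalue interlacing. Viewing $\mathcal{D}'=\mathcal{D}-e_{j_0}e_{j_0}^T$ as a rank-one negative semidefinite perturbation, Weyl's inequalities give (in increasing order) $\lambda_k(\mathcal{D}')\geq \lambda_{k-1}(\mathcal{D})$, so only the smallest eigenvalue of $\mathcal{D}'$ can become non-positive while $\mathcal{D}$ is positive definite. As $\Gjp$ is assumed non-degenerate, $\mathcal{D}'$ has no zero eigenvalue, hence either all its eigenvalues are positive (so $\Gjp$ is negative definite) or exactly one is negative. In the first case $\det(\Gjp)>0$, in the second $\det(\Gjp)<0$. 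Combined with the displayed identity, $\Gjp$ is negative definite if and only if $\det(\Gamma)-\det(\Gj)>0$. Here one uses $\det(\Gj)>0$, which holds because $\Gj$, being a principal restriction of a negative definite form, is itself negative definite.

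Finally, for the last two assertions I would compute $(E^*_{j_0,\Gamma})^2$ directly. Solving $(E^*_{j_0,\Gamma},E_i)=-\delta_{j_0 i}$ expresses $E^*_{j_0,\Gamma}$ through the inverse of the intersection form, and a short computation gives $(E^*_{j_0,\Gamma})^2=-[\mathcal{D}^{-1}]_{j_0 j_0}$; by Cramer's rule the cofactor $[\mathcal{D}^{-1}]_{j_0 j_0}$ equals $\det(\Gj)/\det(\Gamma)$, so
$$(E^*_{j_0,\Gamma})^2=-\frac{\det(\Gj)}{\det(\Gamma)}.$$
Under the standing hypothesis $0<\det(\Gj)<\det(\Gamma)$ this rational number lies strictly in $(-1,0)$, so it is not an integer, giving $(E^*_{j_0,\Gamma})^2\notin\Z$. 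Since the intersection form is integer-valued on $L(\Gamma)$, every element of $L(\Gamma)$ has integral self-intersection, whence $E^*_{j_0,\Gamma}\notin L(\Gamma)$. I expect no serious obstacle here; the only point requiring genuine care is bookkeeping of sign conventions — the paper's $\det(\Gamma)$ is the determinant of the \emph{negative} of the form — so that the multilinear expansion, the interlacing direction, and the cofactor formula all combine to place the final ratio in $(-1,0)$ rather than introducing a spurious sign.
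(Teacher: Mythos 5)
Your proposal is correct and follows essentially the same route as the paper's proof: the determinant identity $\det(\Gamma_{j_0}^+)=\det(\Gamma)-\det(\Gamma\setminus j_0)$, the criterion that $\Gamma_{j_0}^+$ is negative definite precisely when this determinant is positive, and the cofactor formula $-(E^*_{j_0,\Gamma})^2=\det(\Gamma\setminus j_0)/\det(\Gamma)\in(0,1)$. The paper merely asserts these three facts (justifying the sign criterion implicitly via the negative definite principal submatrix $\Gamma\setminus j_0$ of $\Gamma_{j_0}^+$, where you use rank-one Weyl interlacing against $\Gamma$ instead), so your write-up simply supplies the standard linear-algebra details it leaves implicit.
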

\begin{proof}
$\Gjp$ is negative definite if and only if $\det(\Gjp)$ is positive (provided that $\Gj$ is negative definite).
But $\det(\Gjp)=\det(\Gamma)-\det(\Gj)$. The last statement follows from
$-(E^*_{j_0,\Gamma})^2=\det(\Gj)/\det(\Gamma)$, which  cannot be an integer.
\end{proof}

In the sequel we assume that all the graphs are negative definite, but not
necessarily connected. The next theorem is an addendum of Theorem \ref{ES}.
Since its proof is rather long, it will be published elsewhere.

\begin{theorem}\label{ES2}
Assume that $\Gjp$ is negative definite. Consider the exact sequence
$$0\longrightarrow \bH^0(\Gjp)\stackrel{\bA^0}{\longrightarrow} \bH^0(\Gamma)\stackrel{\overline{\bB}^0}{\longrightarrow} \bH^0(\Gj)\stackrel{\bC^0}{\longrightarrow} \bH^{1}(\Gjp)\longrightarrow \cdots$$
and the canonical submodule  $\bt(\Gj)$ of \, $\bH^0(\Gj)$. Then
$\bt(\Gj)\subset im\, \overline{\bB}^0$; or, equivalently, the restriction $\bC^0|\bt(\Gj)$ is zero.
\end{theorem}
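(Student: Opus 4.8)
The plan is to show directly that every element of the canonical submodule $\bt(\Gj)$ lies in the image of $\overline{\bB}^0$, which by exactness of the sequence of Theorem \ref{ES} is equivalent to $\bC^0|\bt(\Gj)=0$. Recall from \eqref{eq:et} that $\bt(\Gj)=\oplus_{[k]}\et^+_{d[k]}$, and that each summand $\et^+_{d[k]}$ is the image of the augmentation $\epsilon$, i.e. it is generated over $\Z[U]$ by the classes of those $0$--cochains built from the minimizing level of the weight function $\q_{\Gj}$ on the orbit $[k]$. So it suffices to produce, for each class $[k]$ of $\Gj$ and each sufficiently negative power $U^{-\ell}$, a cocycle $\Phi\in\calF^0(\Gamma)$ with $\overline{\bB}^0(\Phi)$ representing the corresponding generator of $\et^+_{d[k]}$.

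First I would pass to the homological side, where the $\overline{B}$--operator is concrete. By the surjectivity argument of \ref{sur}, for any $a\equiv e_{j_0}\ (\mathrm{mod}\ 2)$ one has $\overline{B}^0\big(U^{-\ell}*(b(k)+aE^*_{j_0,\Gamma},\emptyset)^\veeK\big)=\pm U^{-\ell}*(k,\emptyset)^\veeK$; thus the point is not to hit the generator (that is automatic) but to do so with a $\delta$--\emph{cocycle} in $\calF^0(\Gamma)$ whose image is the tower generator. The natural candidate is $\Phi:=\overline{B}^0$ applied to a lift of $\epsilon$, i.e. I would take the canonical augmentation cocycle $\epsilon_{\Gj}$ generating $\et^+_{d[k]}\subset \bH^0(\Gj)$ and attempt to lift it through $\overline{B}^0$ to a cocycle on $\Gamma$, using that $\Gamma$ is itself negative definite so that $\bH^0(\Gamma)$ also carries a canonical tower $\bt(\Gamma)$.

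The key geometric input, and the step I expect to be the main obstacle, is a comparison of the minima of the weight functions $\q_\Gamma$ and $\q_{\Gj}$ along a fixed orbit. Negative definiteness of $\Gjp$ (equivalently $\det(\Gamma)>\det(\Gj)$ by Lemma \ref{l:int}, with $-(E^*_{j_0,\Gamma})^2=\det(\Gj)/\det(\Gamma)\in(0,1)$) is exactly what forces the spaces $S_r$ for $\Gamma$ to be compact and, more importantly, makes the ``error term'' $(E_{j_0},i_\Q(l'))_\Gamma\cdot E^*_{j_0,\Gamma}$ in \eqref{biq} controllable: along the $E^*_{j_0,\Gamma}$--direction the function $\q_\Gamma$ grows, because $(E^*_{j_0,\Gamma})^2<0$. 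I would use this to show that the infimum of $\q_\Gamma$ over the preimage of the minimizing locus for $\q_{\Gj}$ is attained, and that the restricted map $\overline{B}_0$ carries the minimizing $0$--chains of $\Gamma$ onto those of $\Gj$ without raising the weight — so that the tower generator of $\et^+_{d[k]}(\Gj)$ is genuinely the $\overline{B}^0$--image of a cocycle supported near the bottom of the corresponding tower of $\Gamma$.

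Finally I would assemble the argument: the element $\epsilon_{\Gamma}(U^{-m_k})$ generating the bottom of $\bt(\Gamma)$ is a $\delta$--cocycle annihilated by $\delta$ since $\delta\circ\epsilon=0$, and by the weight comparison above its image under $\overline{B}^0$ equals (up to a unit and a power of $U$) the corresponding generator $\epsilon_{\Gj}(U^{-m'_k})$ of $\et^+_{d[k]}(\Gj)$; multiplying by powers of $U$ and taking $\Z$--linear combinations over the finitely many classes $[k]$ then yields all of $\bt(\Gj)$ inside $\im\,\overline{\bB}^0$. The delicate bookkeeping will be matching the sign conventions $(-1)^{n(k,a_{j_0})/2}$ of \ref{4.3} against the sign in \ref{sur}, and verifying that the lift can be chosen to vanish against the non--minimizing cubes so that it is exactly a cocycle; this, together with the weight--minimum comparison, is where the negative definiteness of $\Gjp$ is indispensable and is presumably why the full proof is deferred.
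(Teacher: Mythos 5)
You should know at the outset that the paper contains no proof of Theorem~\ref{ES2}: the author states just before it that ``since its proof is rather long, it will be published elsewhere.'' So there is nothing to compare your attempt against, and it must stand on its own. It does not, because the two steps you yourself flag as ``the main obstacle'' and ``delicate bookkeeping'' are not residual technicalities --- they are the entire content, and the central one cannot be taken for granted. Your key claim is that $\overline{B}^0$ sends the bottom generator of a tower $\et^+_{d[k_\Gamma]}\subset\bH^0(\Gamma)$ to a unit multiple (up to a power of $U$) of the bottom generator of $\et^+_{d[k']}\subset\bH^0(\Gj)$. By \ref{4.3}, the value of $\overline{B}^0\phi$ at $k'\in Char(\Gj)$ is the signed sum $\sum_a(-1)^{n(k',a)/2}\phi\big(b(k')+aE^*_{j_0,\Gamma}\big)$ over the arithmetic progression of those $a$ for which $b(k')+aE^*_{j_0,\Gamma}$ lies in the class carrying $\phi$. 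Writing $b(k')+aE^*_{j_0,\Gamma}=i_\Q(k')+rE^*_{j_0,\Gamma}$ with $r=a+(E_{j_0},i_\Q(k'))_\Gamma$, one has, as in the proof of Proposition~\ref{prop:2}, $w_\Gamma\big(i_\Q(k')+rE^*_{j_0,\Gamma}\big)=w_{\Gj}(k')-\tfrac{1}{8}\big(1+r^2(E^*_{j_0,\Gamma})^2\big)$, and since $(E^*_{j_0,\Gamma})^2\in(-1,0)$ by Lemma~\ref{l:int}, the minimal--weight contributions come from the smallest admissible $|r|$. When that minimum is attained by a symmetric pair $\pm r$ (equivalently $rE^*_{j_0,\Gamma}\in L(\Gamma)$, so that both points lie in the same class), the two leading terms have sign ratio $(-1)^r$: for $r$ odd they cancel, for $r$ even they give coefficient $\pm 2$; in either case the bottom of the tower of $[k']$ is not hit with a unit coefficient by this tower of $\Gamma$. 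This is exactly the two--by--two cancellation mechanism used in \ref{ss} to prove $A_q\circ\overline{B}_q=0$, so it cannot be dismissed as sign bookkeeping. A correct version of your strategy would have to choose, for each class $[k']$ of $\Gj$, a suitable class of $\Gamma$ above it (recall that $\overline{\bB}^*$ mixes the orbits $Char/2L$, cf.\ the remark after \ref{4.3}) and control the parities of $n(k,a_{j_0})$ and of the order of $E^*_{j_0,\Gamma}$ in $L'(\Gamma)/L(\Gamma)$; none of this appears in your outline.

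The idea you are missing is that the negative definiteness of $\Gjp$ can be exploited on the \emph{target} of $\bC^0$ rather than inside the geometry of $\overline{B}^0$, which yields the equivalent statement $\bC^0|\bt(\Gj)=0$ with almost no work. Since $\Gjp$ is negative definite, $\bH^1(\Gjp)=\bH^1_{red}(\Gjp)$ has finite $\Z$--rank (\ref{s:LC}, quoted from \cite{NLC}; \ref{2.4} for possibly disconnected graphs), and in the realization $\bH^1(\Gjp,[k])\simeq\oplus_r H^1(S_r,\Z)$ of \ref{2.4} (here $S_r$ is compact, so compactly supported and ordinary cohomology agree) every graded piece is torsion--free; hence $\bH^1(\Gjp)$ is nonzero in only finitely many degrees, and so $U^N\bH^1(\Gjp)=0$ for some $N$. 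On the other hand, $\bt(\Gj)=\oplus_{[k]}\et^+_{d[k]}$ is $U$--divisible: any $x\in\bt(\Gj)$ equals $U^Ny$ for some $y\in\bt(\Gj)$. Since the connecting homomorphism $\bC^0$ of the long exact sequence is $\Z[U]$--linear, $\bC^0(x)=U^N\bC^0(y)=0$. This argument needs no chain--level analysis of $\overline{B}$ at all, and it isolates exactly how the hypothesis on $\Gjp$ enters: it makes $\bH^1(\Gjp)$ a $U$--torsion module annihilated by a fixed power of $U$, while $\bt(\Gj)$ is $U$--divisible. (The author's deferred ``rather long'' proof presumably establishes finer, chain--level information than the bare statement; but for the statement as formulated, the above suffices, and it is the route your proposal should have taken.)
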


\subsection{Graphs with $n$ bad vertices.}\label{badver}
We say that a negative definite connected graph is `rational' if it is the plumbing representation
of a the link of a rational singularity (or, the resolution graph of a rational singularity). They were
characterized combinatorially by Artin, for more details see \cite{Five,NOSZ}.

We fix an integer $n\geq 0$. We say that a negative definite graph has at most $n$ `bad' vertices
if we can find $n$ vertices $\{j_k\}_{1\leq k\leq n}$, such that replacing their decorations $e_{j_k}$
by some more negative integers $e'_{j_k}\leq e_{j_0}$ we get a graph whose all connected components are rational.
(Notice that this is a generalization of the notion of `bad' vertices of \cite{OSzP}.
A graph with at most one bad vertex is called `almost rational' in \cite{NOSZ,NLC}. Any `star--shaped' graph,
i.e.  normal form of a Seifert manifold, has at most one bed vertex, namely the `central' vertex.)

\begin{theorem}\label{bad}
If \, $\Gamma$ has at most $n$ bad vertices then $\bH^q_{red}(\Gamma)=0$ for $q\geq n$.
\end{theorem}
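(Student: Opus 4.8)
The plan is to prove Theorem \ref{bad} by induction on the number $n$ of bad vertices, using the surgery exact sequence of Theorem \ref{ES} together with the key additional input of Theorem \ref{ES2}. The base case is $n=0$: a graph all of whose connected components are rational. For rational graphs the reduced lattice cohomology is known to vanish in all positive degrees, and in fact $\bH^q_{red}=0$ for all $q\geq 0$; this should follow from the characterization of rational graphs (via Artin's criterion, cf. \cite{Five,NOSZ}) and the results of \cite{NLC} relating $\bH^*$ to the combinatorics of the computation sequence. So the claim $\bH^q_{red}(\Gamma)=0$ for $q\geq 0$ holds in the base case.

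For the inductive step, suppose $\Gamma$ has at most $n$ bad vertices with distinguished vertex $j_0$ among them, so that upon making $e_{j_0}$ sufficiently negative (together with the other $n-1$ bad vertices) we obtain rational components. The idea is to compare $\Gamma$ with the graph obtained by increasing the negativity at $j_0$, i.e.\ to run the surgery sequence in the variable $e_{j_0}$. Concretely, I would set up a chain of graphs $\Gamma=\Gamma_0,\Gamma_1,\ldots,\Gamma_T$, where $\Gamma_{t+1}$ is obtained from $\Gamma_t$ by decreasing $e_{j_0}$ by one (equivalently, $\Gamma_t=(\Gamma_{t+1})_{j_0}^+$), chosen so that $\Gamma_T$ has at most $n-1$ bad vertices (because $j_0$ has become negative enough to be absorbed). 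Note $\Gamma\setminus j_0$ is unchanged throughout and, being a subgraph of a graph with at most $n$ bad vertices, has at most $n-1$ bad vertices, so by induction $\bH^q_{red}(\Gamma\setminus j_0)=0$ for $q\geq n-1$. Applying Theorem \ref{ES} to the triple $(\Gamma_{t+1},\Gamma_t,\Gamma_t\setminus j_0=\Gamma\setminus j_0)$ gives
$$\cdots\to \bH^q(\Gamma_{t+1})\stackrel{\bA^q}{\to}\bH^q(\Gamma_t)\stackrel{\overline{\bB}^q}{\to}\bH^q(\Gamma\setminus j_0)\to\cdots.$$
For $q\geq n$ the third term vanishes by the inductive hypothesis, so $\bA^q$ is surjective; for $q>n$ the incoming term $\bH^{q-1}(\Gamma\setminus j_0)$ also vanishes, so $\bA^q$ is an isomorphism. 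Thus the ranks of $\bH^q_{red}(\Gamma_t)$ for $q>n$ are all equal along the chain, and equal to $\bH^q_{red}(\Gamma_T)=0$ by induction (since $\Gamma_T$ has at most $n-1$ bad vertices). This handles $q>n$; the delicate case is exactly $q=n$.

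The heart of the argument is the boundary degree $q=n$. Here I would use Theorem \ref{ES2} together with a monotonicity/stabilization argument in $t$. From the exact sequence, $\ker \bA^q=\im \bC^{q-1}$, and for $q=n$ we have $\bH^{n-1}(\Gamma\setminus j_0)=\et^+_{d}\oplus\bH^{n-1}_{red}$ with $\bH^{n-1}_{red}=0$ when $n\geq 2$ (inductive vanishing, since $\Gamma\setminus j_0$ has at most $n-1$ bad vertices), so for $n\geq 2$ the connecting map $\bC^{n-1}$ is defined on $\bt(\Gamma\setminus j_0)$ (when $n=1$ one takes $\bH^0$ and the full $\bt$). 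By Theorem \ref{ES2}, the restriction of the relevant connecting operator to the canonical submodule $\bt(\Gamma\setminus j_0)$ is zero; hence $\bA^n$ is injective on the relevant piece, and combined with the surjectivity from the vanishing of $\bH^n(\Gamma\setminus j_0)$, one deduces that $\bA^n\colon\bH^n(\Gamma_{t+1})\to\bH^n(\Gamma_t)$ is an isomorphism of the reduced parts for every $t$. Iterating down the chain, $\bH^n_{red}(\Gamma)\cong\bH^n_{red}(\Gamma_T)=0$. The main obstacle I anticipate is making precise how Theorem \ref{ES2} forces the connecting map to kill not just $\bt$ but enough of $\bH^{n-1}(\Gamma\setminus j_0)$ so that $\bA^n$ becomes injective on $\bH^n_{red}$; this requires knowing that at degree $n$ the only surviving contribution from $\bH^{n-1}(\Gamma\setminus j_0)$ is the canonical tower $\bt$ (which is where the inductive vanishing $\bH^{n-1}_{red}(\Gamma\setminus j_0)=0$ enters crucially), and controlling the grading shifts and the choice of $T$ so that $\Gamma_T$ genuinely drops a bad vertex while remaining non-degenerate and negative definite throughout the chain.
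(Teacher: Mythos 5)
Your proposal is correct and takes essentially the same route as the paper: induction on $n$, comparing the graphs $\Gamma_{j_0}(-\ell)$ (decoration $e_{j_0}$ decreased by $\ell$) via the exact sequence of Theorem \ref{ES}, using the inductive vanishing for $\Gamma\setminus j_0$ together with Theorem \ref{ES2}, and concluding because $\Gamma_{j_0}(-\ell)$ has at most $n-1$ bad vertices for $\ell\gg 0$. One clarification that dissolves the obstacle you anticipate at $q=n$ (and fixes your slightly garbled decomposition of $\bH^{n-1}$): the canonical submodule $\bt$ lives only in degree $0$, so for $n\geq 2$ the incoming term $\bH^{n-1}(\Gamma\setminus j_0)=\bH^{n-1}_{red}(\Gamma\setminus j_0)$ vanishes entirely by induction, making $\bA^n$ injective with no further input, while Theorem \ref{ES2} is needed precisely and only in the case $n=1$, where $\bH^{0}(\Gamma\setminus j_0)=\bt(\Gamma\setminus j_0)$.
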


This is a generalization of \cite[(4.3.3)]{NLC}, where it is proved for $n=1$ (compare also with the
vanishing theorems of \cite{OSzP,NOSZ}).

\begin{proof}
We run  induction over $n$. If $n=0$, then all the components of $\Gamma$ are rational. By \cite{NOSZ}, their
reduced lattice cohomology is vanishing. This fact remains true for more components too, since the cohomology
of a tensor product of two acyclic complexes is acyclic.

Assume now that the statement is true for $n-1$ and take
$\Gamma$ with $n$ bad vertices. Let $j$ be one of them. Let $\Gamma_{j}(-\ell)$ be the graph obtained from
 $\Gamma$ by replacing the decoration $e_{j}$ by $e_j-\ell$ ($\ell\geq 0$).  Then consider the long exact sequence (\ref{ES}) associated with $\Gamma_j(-\ell)$, $\Gamma_j(-\ell-1)$ and $\Gj$, for all $\ell\geq 0$.
 Then, by the inductive step, we get that $\bH^q(\Gamma)=\bH^q(\Gamma_j(-\ell))$ for all $\ell$ and $q\geq n$.
 (Here, in the case $n=1$, Theorem \ref{ES2} is also used.)
 Since for $\ell\gg 0$ the graph $\Gamma_j(-\ell)$ has only $n-1$ bad vertices, all these modules vanish.
\end{proof}

In fact, the above statement can be improved as follows.

\begin{theorem}\label{bad2}
Assume that  $\Gamma$ has at most $n\geq 2$ bad vertices  $\{j_k\}_{1\leq k\leq n}$
such that  $\Gamma\setminus j_1$ has at most $(n-2)$ bad vertices. Then $\bH^q_{red}(\Gamma)=0$ for $q\geq n-1$.
\end{theorem}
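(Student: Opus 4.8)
The plan is to run the same inductive machinery as in Theorem~\ref{bad}, but to exploit the stronger hypothesis on $\Gamma\setminus j_1$ to gain one extra degree of vanishing. First I would set $j:=j_1$ and consider the family of graphs $\Gamma_j(-\ell)$ obtained by decreasing the decoration $e_{j_1}$ by $\ell\geq 0$, together with the deleted graph $\Gj=\Gamma\setminus j_1$. For each $\ell$ we have the surgery exact sequence of Theorem~\ref{ES} connecting $\bH^*(\Gamma_j(-\ell-1))$, $\bH^*(\Gamma_j(-\ell))$ and $\bH^*(\Gj)$:
$$\cdots \longrightarrow \bH^q(\Gamma_j(-\ell-1))\longrightarrow \bH^q(\Gamma_j(-\ell))\stackrel{\overline{\bB}^q}{\longrightarrow} \bH^q(\Gj)\longrightarrow \bH^{q+1}(\Gamma_j(-\ell-1))\longrightarrow \cdots$$
The key input is that, by hypothesis, $\Gj=\Gamma\setminus j_1$ has at most $(n-2)$ bad vertices, so Theorem~\ref{bad} gives $\bH^q_{red}(\Gj)=0$ for all $q\geq n-2$.

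The second step is to feed this vanishing into the exact sequence to compare consecutive graphs in the family. For $q\geq n-1$ we have $q>n-2$, so $\bH^q(\Gj)=\bH^q_{red}(\Gj)=0$; and for the connecting term $\bH^{q+1}(\Gamma_j(-\ell-1))$ appearing via $\bC^q$, we also need the relevant piece of $\bH^{\,q}(\Gj)$ feeding into $\bH^{q+1}(\Gamma_j(-\ell-1))$ to vanish, which again follows from $\bH^q_{red}(\Gj)=0$ for $q\geq n-2$. Concretely, inspecting the three-term window
$$\bH^q(\Gj)\longrightarrow \bH^{q+1}(\Gamma_j(-\ell-1))\stackrel{\bA^{q+1}}{\longrightarrow} \bH^{q+1}(\Gamma_j(-\ell))\longrightarrow \bH^{q+1}(\Gj),$$
the outer terms $\bH^q(\Gj)$ and $\bH^{q+1}(\Gj)$ both vanish once $q\geq n-2$, i.e. for all $q+1\geq n-1$. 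Hence $\bA^{q+1}$ is an isomorphism in this range, giving $\bH^q(\Gamma_j(-\ell))\cong\bH^q(\Gamma_j(-\ell-1))$ for every $\ell\geq 0$ and every $q\geq n-1$. (As in the proof of Theorem~\ref{bad}, one must be careful with the boundary of the range, and in the borderline case one invokes Theorem~\ref{ES2} to control the canonical submodule $\bt(\Gj)$ so that the relevant restriction of $\bC$ vanishes and the isomorphism is not spoiled by the $\bt$-part.)

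The third step is the stabilization. For $\ell\gg 0$ the vertex $j_1$, now carrying a very negative decoration, becomes ``good'': decreasing the decoration of a single bad vertex sufficiently far turns $\Gamma_j(-\ell)$ into a graph with at most $n-1$ bad vertices (the remaining $j_2,\dots,j_n$). Applying Theorem~\ref{bad} to this stabilized graph yields $\bH^q_{red}(\Gamma_j(-\ell))=0$ for $q\geq n-1$. Chaining back through the isomorphisms $\bH^q(\Gamma)=\bH^q(\Gamma_j(0))\cong\bH^q(\Gamma_j(-1))\cong\cdots\cong\bH^q(\Gamma_j(-\ell))$ then gives $\bH^q_{red}(\Gamma)=0$ for $q\geq n-1$, as desired.

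The main obstacle I expect is the careful bookkeeping at the boundary of the degree range, precisely at $q=n-1$. In the plain inductive argument of Theorem~\ref{bad} the analogous subtlety is handled by invoking Theorem~\ref{ES2} to guarantee that the restriction of the connecting map $\bC$ to the canonical submodule $\bt(\Gj)$ vanishes; here the corresponding point is ensuring that the vanishing $\bH^q_{red}(\Gj)=0$ for $q\geq n-2$ is strong enough to force the connecting maps into and out of $\bH^{n-1}(\Gamma_j(-\ell))$ to be trivial, so that the isomorphisms propagate all the way down to $q=n-1$ and not merely for $q\geq n$. The negative-definiteness of all graphs involved (needed to apply Theorem~\ref{ES2} and the decomposition $\bH^0=\bt\oplus\bH^0_{red}$) should be verified along the family $\Gamma_j(-\ell)$, but this is automatic since decreasing a decoration of a negative definite graph keeps it negative definite, and deleting a vertex preserves negative definiteness as well.
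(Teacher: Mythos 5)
Your argument is correct and is essentially the paper's own proof, which simply runs the induction of Theorem~\ref{bad} eliminating $j_1$ first: Theorem~\ref{bad} applied to $\Gamma\setminus j_1$ (at most $n-2$ bad vertices) gives the vanishing in degrees $\geq n-2$, Theorem~\ref{ES2} handles the borderline case $q-1=0$, $n=2$, and the stabilization $\ell\gg 0$, where $\Gamma_{j_1}(-\ell)$ has at most $n-1$ bad vertices, finishes via Theorem~\ref{bad} again. The only slip is cosmetic: in Theorem~\ref{ES} the graph with the \emph{larger} decoration plays the role of $\Gamma_{j_0}^+$, so the exact sequence actually runs $\bH^q(\Gamma_{j_1}(-\ell))\stackrel{\bA^q}{\longrightarrow}\bH^q(\Gamma_{j_1}(-\ell-1))\stackrel{\overline{\bB}^q}{\longrightarrow}\bH^q(\Gamma\setminus j_1)$, opposite to the ordering you wrote, but since your argument only uses the vanishing of the $\Gamma\setminus j_1$ terms (and the vanishing of $\bC^0$ on $\bt(\Gamma\setminus j_1)$) to force an isomorphism between the other two terms, nothing is affected.
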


\begin{proof}
The proof is same as above, if one eliminates first the vertex $j_1$.
\end{proof}

See \cite[(8.2)(5.b)]{NOSZ} for a graph $\Gamma$ with 2 bad vertices $\{j_1,j_2\}$ such that $\Gamma\setminus j_1$
has only rational components.

\section{The `relative' surgery exact sequence}\label{s:REL}

\subsection{Preliminaries.} The motivation for the next exact sequence is two--folded. First, the exact sequence
(\ref{ES}) mixes the classes  of the characteristic elements. (Note that the surgery exact sequence
valid for the Heegaard--Floer theory --- which is one of out models for the theory ---  does the same.)
These classes, in topological language,  correspond to the $spin^c$--structures of the corresponding plumbed 3--manifolds. It would be desirable to have a surgery  exact sequence which do not mix them, and allows inductively the computation of each $\bH^*(\Gamma,[k])$ for each $[k]$ independently.

The second motivation is the main  result of \cite{BN}. This is a surgery formula for the Seiberg--Witten invariant of negative definite plumbed 3--manifolds, it compares these invariants for $\Gamma$ and $\Gj$ for fixed
(non--mixed) $spin^c$--structures. The third term in the main formula of \cite{BN} comes from a
`topological' Poincar\'e series
associated with the plumbing graph, and its nature is rather different than the other two terms.

Here our goal is to determine an exact sequence connecting $\bH^*(\Gamma,[\bar{k}])$ and $\bH^*(\Gj,[R(\bar{k})])$
(where $R(\bar{k})$ is the restriction of $\bar{k}$, see \ref{ss:notrem}) with the newly defined  third term,
playing the role of the relative cohomology.  Its relationship with
the Poincar\'e series used in \cite{BN} will also be treated.

\subsection{The `relative' complex and cohomology.}
We consider a non--degenerate graph $\Gamma$ and $j_0$ one of its vertices.

We fix $[k]\in Char(\Gj)/2L(\Gj)$ and a characteristic element
$k_m\in[k]$  with $w(k_m)=\min_{k\in[k]}w(k)$.
Furthermore, we fix $a_0$ satisfying $a_0\equiv e_{j_0}$ (mod 2). Then
 $k_{a_0}:=i(k_m)+ \big(\,(i(k_m),E_{j_0})+a_0\,\big)E^*_{j_0}\in Char(\Gamma)$.
 In fact, for any $k'\in[k]$ one gets that
$ i(k')+ \big(\,(i(k_m),E_{j_0})+a_0\,\big)E^*_{j_0}\in Char(\Gamma)$ and it is an element of
$[k_{a_0}]$. For simplicity we write $r_0$ for
$(i(k_m),E_{j_0})+a_0$.

We define $$B_{0,rel}:\calF_0(\Gamma\setminus j_0,[k])\to
\calF_0(\Gamma,[k_{a_0}])$$
by
\begin{equation}\label{eq:br2}
 B_{0,rel}(k')= i(k')+ \big(\,(i(k_m),E_{j_0})+a_0\,\big)E^*_{j_0}=i(k')+r_0E^*_{j_0}.
\end{equation}
This extends to the level of complexes $B_{*,rel}:(\calF_*(\Gamma\setminus j_0,[k]),\partial)\to
(\calF_*(\Gamma,[k_{a_0}]),\partial)$ by $B_{*,rel}((k,I))=(B_{0,rel}(k),I)$. Its dual
 $B^*_{rel}: (\calF^*(\Gamma,[k_{a_0}]),\delta)\to (\calF^*(\Gj,[k]),\delta)$
 is defined by $B^*_{rel}(\phi)=\phi\circ B_{*,rel}$.

By a similar argument as in (\ref{sur})  we get that
$$B^*_{rel}: \calF^*(\Gamma,[k_{a_0}])\to \calF^*(\Gj,[k]) \ \ \ \mbox{is surjective}. $$
We define the {\it `relative'} complex $\calF^*_{rel}=\calF^*_{rel}(\Gamma,j_0,[k],a_0])$ via
$\ker(B^*_{rel})$.
Let the cohomology of the complex $(\calF^*_{rel},\delta)$ be $\bH^*_{rel}=
\bH^*_{rel}(\Gamma,j_0,[k],a_0)$. It is a graded $\Z[U]$--module. 
We refer to it as the {\it relative lattice cohomology}.

Note that both $\calF^*_{rel}$ and $B^*_{rel}$ depend on the choice of the representative
$k_{a_0}$ of $[k_{a_0}]$ and are {\it not} invariants merely of the classes $[k_{a_0}]$ and $[k]$.

\begin{theorem}\label{ESk}
One has the short exact sequence of complexes:
$$0\longrightarrow \calF^*_{rel}\stackrel{A^*_{rel}}{\longrightarrow}
\calF^*(\Gamma,[k_{a_0}])\stackrel{B^*_{rel}}{\longrightarrow} \calF^*(\Gj,[k])
\longrightarrow 0, $$ which provides  a long exact sequence of \, $\Z[U]$--modules:
$$\cdots \longrightarrow \bH^q_{rel}\stackrel{\bA^q_{rel}}{\longrightarrow} \bH^q(\Gamma,[k_{a_0}])\stackrel{\bB^q_{rel}}{\longrightarrow} \bH^q(\Gj,[k])\stackrel{\bC^q_{rel}}{\longrightarrow} \bH^{q+1}_{rel}\longrightarrow \cdots$$
(Again, the relative cohomology modules and the operators in the above exact sequence depend on the
choice of $k_{a_0}$, and not only on its class $[k_{a_0}]$.)
\end{theorem}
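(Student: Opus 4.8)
The plan is to establish the short exact sequence of complexes first, and then obtain the long exact sequence in cohomology by the standard snake lemma / zig--zag argument, exactly as Theorem~\ref{ES} was deduced from Theorem~\ref{Th1}. The surjectivity of $B^*_{rel}$ is already noted in the text (it follows by the same computation as in \S\ref{sur}, using that $B_{0,rel}$ lifts each characteristic element $k'\in[k]$ to the single element $i(k')+r_0E^*_{j_0}$ and hence $B^q_{rel}\big(U^{-\ell}*(i(k')+r_0E^*_{j_0},I)^\veeK\big)=U^{-\ell}*(k',I)^\veeK$). Since $\calF^*_{rel}$ is \emph{defined} as $\ker(B^*_{rel})$, the sequence
$$0\longrightarrow \calF^*_{rel}\stackrel{A^*_{rel}}{\longrightarrow}
\calF^*(\Gamma,[k_{a_0}])\stackrel{B^*_{rel}}{\longrightarrow} \calF^*(\Gj,[k])
\longrightarrow 0$$
is exact by construction, with $A^*_{rel}$ the inclusion of the kernel. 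The only genuine content at the complex level is that $B^*_{rel}$ is a \emph{morphism of complexes}, i.e.\ that $B_{*,rel}$ commutes with $\partial$; this is immediate from the defining formula $B_{*,rel}((k',I))=(B_{0,rel}(k'),I)$ together with the fact that $B_{0,rel}$ is simply the affine map $k'\mapsto i(k')+r_0E^*_{j_0}$, so that the vertices of a cube are carried to the vertices of a cube and the weight differences $w(k',I)-w(k',I\setminus j_l)$ are preserved under the shift. One must check that all cubes involved stay within the fixed class $[k_{a_0}]$, which holds because $i(2E_j)\in 2L(\Gamma)$ for $j\neq j_0$.

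First I would verify the weight--preservation claim carefully, since the exponents $U^{w(k,I)-w(k,I\setminus j_l)}$ in $\partial$ (formula~(\ref{eq:partial})) must be matched on both sides. Because $B_{0,rel}$ adds the \emph{fixed} vector $r_0E^*_{j_0}$ to $i(k')$, and because $(E^*_{j_0},E_j)=-\delta_{j_0 j}$, the function $\q$ changes additively in a way that is constant along the faces of any cube not moving in the $j_0$ direction; and since $j_0\notin\calj(\Gj)$, none of the cubes $(k',I)$ with $I\subset\calj(\Gj)$ ever moves in that direction. This is why $B_{*,rel}$, unlike the operator $R$ of \S\ref{ss:notrem}, descends to cubes without the complications of~(\ref{eq:R}): the $E^*_{j_0}$--coordinate is frozen at $r_0$, so the image cubes are honest cubes of $\Gamma$ lying in the single class $[k_{a_0}]$. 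I would record this as the short computation underlying the assertion ``$B^q_{rel}$ is a morphism of complexes.''

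Once the short exact sequence is in hand, the long exact sequence of $\Z[U]$--modules
$$\cdots \longrightarrow \bH^q_{rel}\stackrel{\bA^q_{rel}}{\longrightarrow} \bH^q(\Gamma,[k_{a_0}])\stackrel{\bB^q_{rel}}{\longrightarrow} \bH^q(\Gj,[k])\stackrel{\bC^q_{rel}}{\longrightarrow} \bH^{q+1}_{rel}\longrightarrow \cdots$$
follows formally, with $\bC^q_{rel}$ the connecting homomorphism; all three maps are $U$--equivariant because $A^*_{rel}$, $B^*_{rel}$ and $\partial$ all commute with the $\Z[U]$--action. The finiteness of the $\Z$--rank of $\bH^*_{rel}$, asserted in Theorem~C, would then be read off from the long exact sequence: each of $\bH^*(\Gamma,[k_{a_0}])$ and $\bH^*(\Gj,[k])$ has reduced part of finite rank (by the general theory recalled in \S\ref{s:LC}), and in the fixed--class relative setting the ``$\et^+$--tails'' on the two sides are carried onto one another by $\bB^*_{rel}$ up to a degree shift, so the connecting map confines $\bH^*_{rel}$ to a finitely generated piece.

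The main obstacle I expect is \emph{not} the formal homological algebra but the verification that $B_{*,rel}$ genuinely commutes with $\partial$ while respecting the fixed class $[k_{a_0}]$ --- in other words, pinning down that the representative $k_{a_0}$ (equivalently the choice of $r_0=(i(k_m),E_{j_0})+a_0$) makes all the requisite cubes characteristic and keeps the weight exponents non--negative and matched. This is the point where the construction's dependence on the choice of $k_{a_0}$ (emphasized in the statement) actually enters, and it is the analogue of the delicate sign/shift bookkeeping in \S\ref{sINJ}; here, however, because we have frozen the $j_0$--direction rather than summing over it as in $B_q$ of~(\ref{biq2}), no sign modification is needed and the injectivity of $A^*_{rel}$ is automatic. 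So the work is concentrated in one weight computation, after which the exact sequence is a direct application of the snake lemma.
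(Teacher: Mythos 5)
Your proposal is correct and follows essentially the same route as the paper's (largely implicit) proof: the paper likewise defines $\calF^*_{rel}$ as $\ker(B^*_{rel})$ so that $A^*_{rel}$ is the kernel inclusion, obtains surjectivity of $B^*_{rel}$ by the argument of \S\ref{sur}, relies on the fact that $B_{*,rel}$ commutes with $\partial$ because $\q$ shifts by the constant $-\tfrac{1+r_0^2(E^*_{j_0})^2}{8}$ (so all weight differences along cubes are preserved, exactly your computation), and then the long exact sequence is standard homological algebra with $U$--equivariant maps. One caveat on your closing aside: the finite $\Z$--rank of $\bH^*_{rel}$ is not part of this theorem and does require $\Gamma$ negative definite (it is Proposition~\ref{prop:2}); for merely non--degenerate graphs the cohomologies need not have finite--rank reduced parts (cf.\ Example~\ref{+1}), so that argument does not go through in the generality of Theorem~\ref{ESk}.
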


\vspace{2mm}

In the case when $\Gamma$ is negative definite, the restriction of \,$\bB^0_{rel}$ to
$\et^+_{d[k_{a_0}]}$  has its image in $\et^+_{d[k]}$.

\begin{proposition}\label{prop:2}
Assume that $\Gamma$ (but not necessarily $\Gjp$)  is negative definite. Then
$$\bB^0_{rel}:\et^+_{d[k_{a_0}]}\to \et^+_{d[k]} \ \ \mbox{ is onto}.$$
 In particular,
 $\bH^*_{rel}$ has finite rank over $\Z$. Moreover, one has an exact sequence of finite $\Z$--modules:
 $$0\longrightarrow \bH^0_{rel}\longrightarrow \bH^0_{red}(\Gamma,[k_{a_0}])\oplus \Z^n
 \longrightarrow \bH^0_{red}(\Gj,[k])\longrightarrow \bH^{1}_{rel}\longrightarrow
 \bH^1_{red}(\Gamma,[k_{a_0}]) \longrightarrow \bH^1_{red}(\Gj,[k])
 \cdots$$
where $n:=w_\Gamma(i(k_m)+r_0E^*_{j_0})-\min \{\,w_\Gamma\mid[k_{a_0}]\}\in \Z_{\geq 0}$.
\end{proposition}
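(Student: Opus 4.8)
The plan is to analyze the long exact sequence of Theorem~\ref{ESk} together with the canonical decompositions $\bH^0(\Gamma,[k_{a_0}])=\et^+_{d[k_{a_0}]}\oplus\bH^0_{red}(\Gamma,[k_{a_0}])$ and $\bH^0(\Gj,[k])=\et^+_{d[k]}\oplus\bH^0_{red}(\Gj,[k])$ valid in the negative definite case, cf.\ (\ref{eq:et}). The first step is to prove that $\bB^0_{rel}:\et^+_{d[k_{a_0}]}\to\et^+_{d[k]}$ is onto. Since $\Gamma$ is negative definite, the weight functions on both graphs attain their minima, and the augmentation $\epsilon$ embeds $\et^+$ as the ``stable'' tower in each $\bH^0$. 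First I would compute, using (\ref{eq:br2}) and the definition of $B_{*,rel}$ at the level of $0$-cubes, the effect of $B^0_{rel}$ on the homogeneous generators $U^{-\ell}$ of $\et^+_{d[k_{a_0}]}$: because $B_{0,rel}(k')=i(k')+r_0E^*_{j_0}$ shifts weights by a controlled amount, one checks that a cocycle representing the top of the tower in $\bH^0(\Gamma,[k_{a_0}])$ maps to a cocycle representing the top of the tower in $\bH^0(\Gj,[k])$, up to some power of $U$. The key point is to pin down that the degree shift between the two $\et^+$-towers is exactly $2n$ with $n=w_\Gamma(i(k_m)+r_0E^*_{j_0})-\min\{w_\Gamma\mid[k_{a_0}]\}$, which forces $\bB^0_{rel}$ to be surjective onto $\et^+_{d[k]}$ (a graded surjection of such towers that is eventually an isomorphism on high $U$-powers is automatically onto).

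Given surjectivity of $\bB^0_{rel}$ on the tower parts, the next step is to extract the reduced sequence. Restricting the long exact sequence to the tower-quotients: since $\bt(\Gamma,[k_{a_0}])\to\bt(\Gj,[k])$ is onto, its kernel is a free $\Z[U]$-summand (a truncated tower $\et$ of $\Z$-rank exactly $n$, namely the part of $\et^+_{d[k_{a_0}]}$ killed by $\bB^0_{rel}$), contributing the factor $\Z^n$. Here I would make precise that $\ker(\bB^0_{rel}|\et^+)\cong\Z^n$ as a $\Z$-module, which is where the integer $n$ enters: it measures exactly how much of the bottom of the source tower is annihilated before the two towers align. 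Then I would chase the snake: the injectivity of $\bA^q_{rel}$ on $\bH^0_{rel}$ (its image being $\ker\bB^0_{rel}$) gives the leftmost $0\to\bH^0_{rel}$, and splitting off the full towers from $\bH^0(\Gamma,[k_{a_0}])$ and $\bH^0(\Gj,[k])$ replaces them by their reduced parts while the tower contribution collapses to $\Z^n$ in the middle term. For $q\ge1$ one has $\bH^q=\bH^q_{red}$, so the tail of the sequence is unchanged and reads as stated.

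The finiteness of the $\Z$-rank of $\bH^*_{rel}$ then follows immediately: $\bH^*_{red}(\Gamma,[k_{a_0}])$ and $\bH^*_{red}(\Gj,[k])$ have finite $\Z$-rank by the general theory of \S\ref{s:LC} (negative definiteness), and $\Z^n$ is finite, so every term in the reduced exact sequence is a finitely generated $\Z$-module; exactness then bounds the rank of $\bH^*_{rel}$ by the ranks of its neighbors. I would phrase this as: the relative cohomology fits in an exact sequence of finite $\Z$-modules, hence is itself of finite rank, with the full tower $\et^+$ never appearing in $\bH^*_{rel}$ because it has been entirely absorbed by $\bB^0_{rel}$.

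The main obstacle I expect is the precise bookkeeping of the weight shift that produces the integer $n$ and the verification that $\bB^0_{rel}$ is onto on the towers. The subtlety is that $B_{0,rel}$ is defined using the \emph{specific} representative $k_{a_0}$ (equivalently the choice of $a_0$), and the minimum of $w_\Gamma$ over $[k_{a_0}]$ need not be realized at $i(k_m)+r_0E^*_{j_0}$ itself; the gap between the weight of this chosen element and the true minimum is exactly $n$, and one must show this gap is the $\Z$-rank of the cokernel of the tower map at the chain level. This requires carefully comparing $\min_{k\in[k]}w(k)$ in $\Gj$ with $\min\{w_\Gamma\mid[k_{a_0}]\}$ in $\Gamma$, using the relation between the two weight functions under restriction $R$ and the shift by $r_0E^*_{j_0}$. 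Once this single numerical identity is established, the rest of the proposition is formal homological algebra applied to the split decomposition $\bH^0=\bt\oplus\bH^0_{red}$.
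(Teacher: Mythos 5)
Your proposal is correct and takes essentially the same route as the paper: the paper also reduces everything to the tower computation, writing down the explicit cochain generators $\bar\phi_l$ and $\phi_l$ of the two towers and using the weight identity $w_\Gamma(i(k')+r_0E^*_{j_0})=w_{\Gj}(k')-\frac{1+r_0^2(E^*_{j_0})^2}{8}$ (applied at $k'=k_m$ to produce the integer $n$) to conclude $\bB^0_{rel}(\bar\phi_{n+l})=\phi_l$, hence surjectivity on the towers with kernel of $\Z$--rank exactly $n$. The extraction of the reduced exact sequence and the finiteness of $\bH^*_{rel}$ is then the same formal homological algebra you outline, which the paper leaves implicit.
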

\begin{proof}
First note that
\begin{equation*}
w_\Gamma (i(k')+r_0E^*_{j_0})=w_{\Gamma\setminus j_0}(k') -\frac{1+r_0^2(E^*_{j_0})^2}{8}.
\end{equation*}
This applied for $k'=k_m$ provides
\begin{equation}\label{eq:ndk}
n +\frac{d[k_{a_0}]}{2}=\frac{d[k]}{2}-\frac{1+r_0^2(E^*_{j_0})^2}{8}.
\end{equation}
For any $l\geq 0$ and $\bar{k}\in [k_{a_0}]$
set $\bar{\phi}_l\in\calF^0(\Gamma,[k_{a_0}])$ defined by
\begin{equation*}\label{eq:phia}
\bar{\phi}_l(\bar{k})=
U^{-l-d[k_{a_0}]/2+\q_L(\bar{k})} \ \ \ (\bar{k}\in[k_{a_0}]).\end{equation*}
For different $l\geq 0$ they generate
$\et^+_{d[k_{a_0}]}\subset \bH^0(\Gamma,[k_{a_0}])$.
Similarly, set $\{\phi_l\}_{l\geq 0}$ in $\calF^0(\Gj,[k])$, where
\begin{equation*}\label{eq:phi}
\phi_l(k')=
U^{-l-d[k]/2+\q(k')} \ \ \ (k'\in[k]).
\end{equation*}
They generate $\et^+_{d[k]}$.
From the above identities and from the definition of $B^0_{rel}$ one gets for any $l\geq 0$:
$$\bB^0_{rel}(\bar{\phi}_{n+l})=\phi_l.$$
Hence the restriction $\bB^0_{rel}:\et^+_{d[k_{a_0}]}\to \et^+_{d[k]}$ is onto
and the $\Z$--rank of its kernel is $n$.
\end{proof}
The reader is invited to recall the definition of the
Euler characteristic of the lattice cohomology from (\ref{eq:EU}).
We define the {\it Euler characteristic of the relative lattice cohomology} by
\begin{equation*}
eu(\bH^*_{rel}):=\sum_{q\geq 0}(-1)^q \rank_\Z\, \bH^q_{rel}.
\end{equation*}
Then the exact sequence of Proposition~\ref{prop:2} and equation (\ref{eq:ndk}) provide
\begin{corollary}\label{cor:eu}  With the notation $r_0:=(i(k_m),E_{j_0})+a_0$ one has
$$eu(\bH^*_{rel}(\Gamma,j_0,[k],a_0))=eu(\bH^*(\Gamma,[k_{a_0}]))-eu(\bH^*(\Gamma\setminus j_0,[k]))
-\frac{1+r_0^2(E^*_{j_0})^2}{8}.$$
\end{corollary}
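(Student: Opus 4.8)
The plan is to compute the Euler characteristic directly from the long exact sequence of Proposition~\ref{prop:2}. Recall that for a bounded exact sequence of finite-rank $\Z$-modules, the alternating sum of the ranks vanishes. So first I would take the exact sequence
$$0\to \bH^0_{rel}\to \bH^0_{red}(\Gamma,[k_{a_0}])\oplus \Z^n\to \bH^0_{red}(\Gj,[k])\to \bH^1_{rel}\to \bH^1_{red}(\Gamma,[k_{a_0}])\to\cdots$$
and apply the standard fact that the alternating sum of $\Z$-ranks over an exact sequence is zero. This yields
$$\sum_{q\geq 0}(-1)^q\rank_\Z \bH^q_{rel}-\sum_{q\geq 0}(-1)^q\big(\rank_\Z \bH^q_{red}(\Gamma,[k_{a_0}])\big)-n+\sum_{q\geq 0}(-1)^q\rank_\Z \bH^q_{red}(\Gj,[k])=0,$$
where the $\Z^n$ contributes only in degree zero with sign $+1$. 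By definition of $eu(\bH^*_{rel})$ this gives
$$eu(\bH^*_{rel})=\sum_{q\geq 0}(-1)^q\rank_\Z \bH^q_{red}(\Gamma,[k_{a_0}])-\sum_{q\geq 0}(-1)^q\rank_\Z \bH^q_{red}(\Gj,[k])+n.$$

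Next I would convert the two reduced alternating sums into the genuine Euler characteristics $eu(\bH^*(\Gamma,[k_{a_0}]))$ and $eu(\bH^*(\Gj,[k]))$ using the definition (\ref{eq:EU}). That definition inserts a correction term $-d[\cdot]/2$, so
$$\sum_{q\geq 0}(-1)^q\rank_\Z \bH^q_{red}(\Gamma,[k_{a_0}])=eu(\bH^*(\Gamma,[k_{a_0}]))+\frac{d[k_{a_0}]}{2},$$
and similarly for $\Gj$. Substituting both relations, the expression for $eu(\bH^*_{rel})$ becomes
$$eu(\bH^*(\Gamma,[k_{a_0}]))-eu(\bH^*(\Gj,[k]))+\frac{d[k_{a_0}]}{2}-\frac{d[k]}{2}+n.$$

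Finally I would eliminate the remaining $d$-terms and $n$ via the identity (\ref{eq:ndk}), namely $n+d[k_{a_0}]/2=d[k]/2-(1+r_0^2(E^*_{j_0})^2)/8$. Rearranging gives $n+d[k_{a_0}]/2-d[k]/2=-(1+r_0^2(E^*_{j_0})^2)/8$, which is exactly the discrepancy appearing above. Plugging this in produces the claimed formula
$$eu(\bH^*_{rel})=eu(\bH^*(\Gamma,[k_{a_0}]))-eu(\bH^*(\Gj,[k]))-\frac{1+r_0^2(E^*_{j_0})^2}{8}.$$
The argument is essentially bookkeeping, so I do not expect a serious conceptual obstacle; the only point requiring care is justifying that the alternating-sum-of-ranks identity applies, i.e.\ that the long exact sequence is a sequence of finite-rank $\Z$-modules that is eventually zero. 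This follows because $\Gamma$ is negative definite, so all the reduced cohomologies have finite $\Z$-rank and vanish for large $q$ (and $\bH^*_{rel}$ has finite rank by Proposition~\ref{prop:2}), guaranteeing that the sequence is finite and the alternating sum is well defined.
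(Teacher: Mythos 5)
Your proposal is correct and is exactly the argument the paper intends: the corollary is stated as an immediate consequence of the exact sequence in Proposition~\ref{prop:2} and equation (\ref{eq:ndk}), and your bookkeeping (alternating sum of ranks over the finite exact sequence, conversion of reduced ranks to $eu$ via (\ref{eq:EU}), then substitution of (\ref{eq:ndk})) fills in precisely those steps with the right signs. Your closing remark on finiteness is also the right justification, since all terms vanish above degree $s=|\calj|$ and have finite $\Z$--rank by negative definiteness and Proposition~\ref{prop:2}.
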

Fix any $l\in L(\Gamma\setminus j_0)$, then $k_{a_0}+2l=i(k_m+2l)+r_0 E^*_{j_0}$, hence we also get
\begin{corollary}\label{cor:eu2} For any $l\in L(\Gamma\setminus j_0)$ one has
\begin{equation}\label{eq:ueue}
\begin{split}
eu(\bH^*_{rel}(\Gamma,j_0,[k_m],a_0))=eu(\bH^*(\Gamma,[k_{a_0}]))
-\frac{(k_{a_0}+2l)^2_\Gamma+|\calj|}{8}\\
-eu(\bH^*(\Gamma\setminus j_0,[k_m]))
+\frac{(k_m+2l)^2_{\Gamma\setminus j_0}+|\calj\setminus j_0|}{8}.\end{split}\end{equation}
\end{corollary}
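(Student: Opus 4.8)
The plan is to obtain Corollary \ref{cor:eu2} from Corollary \ref{cor:eu} by a single substitution: I will rewrite the $l$--independent correction term $-\tfrac{1+r_0^2(E^*_{j_0})^2}{8}$ as a difference of the two quadratic weights attached to $k_{a_0}+2l$ and $k_m+2l$. The only input is the point--wise identity recorded at the very start of the proof of Proposition~\ref{prop:2},
$$\q_\Gamma\big(i(k')+r_0E^*_{j_0}\big)=\q_{\Gj}(k')-\frac{1+r_0^2(E^*_{j_0})^2}{8},$$
valid for every characteristic element $k'$ of $\Gj$; here $\q_\Gamma(\bar k)=-(\bar k^2+|\calj|)/8$ is the weight of the $0$--cube $\bar k$. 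Granting this, essentially nothing is left to prove.

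If a self--contained derivation is preferred, I would expand $\q_\Gamma(\bar k)=-(\bar k^2+|\calj|)/8$ at $\bar k=i(k')+r_0E^*_{j_0}$. The cross term $2r_0\,(i(k'),E^*_{j_0})_\Gamma$ vanishes, since $(i(k'),E^*_{j_0})_\Gamma=(k',R(E^*_{j_0}))_{\Gj}=0$ using $R(E^*_{j_0,\Gamma})=0$ from \ref{ss:notrem}; and $(i(k'))^2_\Gamma=(k')^2_{\Gj}$, because $R\circ i=\mathrm{id}$ on $L(\Gj)$ (checked on the generators via $E_j=-e_jE^*_j-\sum_{(i,j)}E^*_i$), so that $(i(k'),i(k'))_\Gamma=(k',R\,i(k'))_{\Gj}=(k')^2_{\Gj}$. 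Together with $|\calj|-|\calj\setminus j_0|=1$ this reproduces the displayed formula.

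Finally I would set $k'=k_m+2l$ for a fixed $l\in L(\Gj)$ and invoke the guiding identity $i(k_m+2l)+r_0E^*_{j_0}=k_{a_0}+2l$, so that the above reads
$$-\frac{(k_{a_0}+2l)^2_\Gamma+|\calj|}{8}=-\frac{(k_m+2l)^2_{\Gj}+|\calj\setminus j_0|}{8}-\frac{1+r_0^2(E^*_{j_0})^2}{8}.$$
Solving for $-\tfrac{1+r_0^2(E^*_{j_0})^2}{8}$ and inserting the result into Corollary~\ref{cor:eu} turns its correction term into precisely the two quadratic expressions appearing in (\ref{eq:ueue}), which is the assertion. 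There is no genuine obstacle here; the only point that warrants a word is that the right--hand side of (\ref{eq:ueue}) looks $l$--dependent while the left--hand side is a fixed number. This is automatic: the point--wise identity says exactly that $\q_\Gamma(k_{a_0}+2l)-\q_{\Gj}(k_m+2l)$ equals the constant $-\tfrac{1+r_0^2(E^*_{j_0})^2}{8}$ for every $l$, so the apparent dependence cancels and the formula holds uniformly in $l\in L(\Gj)$.
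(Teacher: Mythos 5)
Your proposal is correct and takes essentially the same route as the paper: Corollary \ref{cor:eu2} is obtained from Corollary \ref{cor:eu} by combining the identity $k_{a_0}+2l=i(k_m+2l)+r_0E^*_{j_0}$ with the weight identity $w_\Gamma\big(i(k')+r_0E^*_{j_0}\big)=w_{\Gj}(k')-\tfrac{1+r_0^2(E^*_{j_0})^2}{8}$ from the proof of Proposition \ref{prop:2}, applied at $k'=k_m+2l$. Your expansion of the square (vanishing cross term via $R(E^*_{j_0,\Gamma})=0$, and $(i_\Q(k'))^2_\Gamma=(k')^2_{\Gj}$) just makes explicit the computation the paper leaves implicit.
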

\subsection{Reinterpretation.}\label{ss:1re}
Above, we started with an element $k_m$ of the class $[k]$, and we constructed one of its extensions
$k_{a_0}$. Since $k_{a_0}+i(2l)=i(k_m+2l)+r_0E^*_{j_0}$, we have
$k_m+2l=R(k_{a_0}+2l)$ for any $l\in L(\Gj)$.

This procedure can be inverted. Indeed, let us fix any $\bar{k}\in Char(\Gamma)$  (which plays  the
role of $k_{a_0}+i(2l)$). Then set $R(\bar{k})$ and  define
$r_0$ by the identity $r_0E^*_{j_0}=\bar{k}-iR(\bar{k})$. Finally,
define $$B_{0,rel}:\calF_0(\Gamma\setminus j_0,[R(\bar{k})])\to
\calF_0(\Gamma,[\bar{k}]) \ \ \ \mbox{by} \ \ \
 B_{0,rel}(k')= i(k')+ r_0E^*_{j_0},$$
whose kernel is the relative complex $\calF^*_{rel}$ with cohomology $\bH^*_{rel}(\Gamma,j_0,\bar{k})$.
Then (\ref{eq:ueue}) reads as
\begin{equation}\label{eq:ueueue}
\begin{split}
eu(\bH^*_{rel}(\Gamma,j_0,\bar{k}))&=eu(\bH^*(\Gamma,[\bar{k}]))
-\frac{(\bar{k})^2_\Gamma+|\calj|}{8}\\ &
-eu(\bH^*(\Gamma\setminus j_0,[R(\bar{k})]))
+\frac{(R(\bar{k}))^2_{\Gamma\setminus j_0}+|\calj\setminus j_0|}{8}.
\end{split}\end{equation}
The identity (\ref{eq:ueueue}) depends essentially
on the choice of the choice of $\bar{k}\in Char(\Gamma)$. In fact, even if we fix the class
$[\bar{k}]$, the choice of the representative $\bar{k}$ from the class $[\bar{k}]$ provides essentially
different identities of type (\ref{eq:ueueue}): not only the terms
$ eu(\bH^*_{rel}(\Gamma,j_0,\bar{k}))$, $(\bar{k})^2_\Gamma$ and $(R(\bar{k}))^2_{\Gamma\setminus j_0}$
depend on the choice of $\bar{k}$, but even the class $[R(\bar{k})]$.

\subsection{The connection with the topological Poincar\'e series.}\label{ss:PoSe}
Let $K_\Gamma\in L'$ denote the  {\it canonical characteristic element } 
of $\Gamma$  defined by the  {\it adjunction formulae}
$(K_\Gamma+E_j,E_j)+2=0$ for all $j\in\calj$.
Similarly, one defines  $K_{\Gj}\in  Char(\Gj)$.  
Note that $Char(\Gamma)=K+2L'(\Gamma)$ and $K_{\Gj}=R(K_\Gamma)$.
The next result computes $eu(\bH^*_{rel}(\Gamma,j_0,K+2l'))$ 
in terms of $l'$ via the coefficients of a
series associated with $\Gamma$.

 Consider the  multi-variable
Taylor expansion $Z(\btt)=\sum p_{l'}\btt^{l'}$ at the  origin of
\begin{equation}\label{eq:INTR}\prod_{j\in \calj} (1-\btt^{E^*_j})^{\delta_j-2},\end{equation}
where for any $l'=\sum _jl_jE_j\in L'$ we write
$\btt^{l'}=\prod_jt_j^{l_j}$, and $\delta_j$ is the valency of $j$.
 This lives in $\Z[[L']]$, the submodule of
formal power series $\Z[[\btt^{\pm 1/d}]]$ in variables $\{t_j^{\pm 1/d}\}_j$, where $d=\det(\Gamma)$.
The series $Z(\btt)$ was used in several articles studying invariants of surface singularities, see
\cite{CDG,CDGEq,CHR,CDGb,coho3,NSW} for different aspects.

For any series $S(\btt)\in \Z[[L']]$, $S(\btt)=\sum_{l'}c_{l'}\btt^{l'}$, we have the
natural decomposition $$S=\sum_{h\in L'/L}S_h, \ \ \mbox{where} \ \  S_h:=\sum _{l'\,:\ [l']=h}\,
 c_{l'}\btt^{l'}.$$
In particular, for any fixed class $[l']\in L'/L$, one can consider the component
$Z_{[l']}(\btt)$ of $Z(\btt)$. In fact, see e.g. \cite[(3.1.20)]{CDGb},
\begin{equation}\label{eq:Po}
Z_{[l']}(\btt)=
\frac{1}{d}\sum _{\rho\in (L'/L)\,\widehat{}} \rho([l'])^{-1}\cdot
\prod_{j\in \calj} (1-\rho([E^*_j])\btt^{E^*_j})^{\delta_j-2},
\end{equation}
where $(L'/L)\,\widehat{}$ \ is the Pontjagin dual of $L'/L$.

Furthermore, once the vertex $j_0$ of $\Gamma$ is fixed,
for any class $[l']\in L'/L$ we  set
$$\cH_{[l'],j_0}(t):=Z_{[l']}(\btt)\big|_{t_{j_0}=t^d \ \ \ \ \ \ \ \atop \ t_j=1\ \mbox{\tiny{for}} \ j\not=j_0
}\ \in \Z[[t]].$$

 Let  $S(t) = \sum_{i\geq 0} c_i t^i$  be a formal power series.
  Suppose that for some positive integer $p$,
  the expression $\sum_{i=0}^{pn-1} c_i$ is a polynomial $P_p(n)$ in the
  variable $n$.  Then the constant term of $P_p(n)$ is independent of $p$.
  We call this constant term the \emph{periodic constant} of $S$ and
  denote it by $\mathrm{pc}(S)$ (cf.  \cite{NO1}).

\begin{proposition}\label{prop:PS}  Fix the vertex $j_0$ of $\Gamma$ and
write  $\cH_{[l'],j_0}(t)$ as $\sum_{i\geq 0}c_it^i$.

(a)  If $l'=\sum_ja_jE^*_j=\sum_jl'_jE_j\in L'(\Gamma)$ with all
$a_j$ sufficiently large
then $$\sum _{i <dl'_{j_0}}\, c_i = eu(\bH^*_{rel}(\Gamma,j_0,K+2l')).$$

(b) Take   $\bar{l}'=\sum_j\bar{l}'_jE_j\in L'(\Gamma)$ with $\bar{l}'_{j_0}\in [0,1)$.
Then $$\mathrm{pc}( \cH_{[\bar{l}'],j_0}) = eu(\bH^*_{rel}(\Gamma,j_0,K+2\bar{l}')).$$
\end{proposition}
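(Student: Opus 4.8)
The plan is to reduce the statement to the already-established identity (\ref{eq:ueueue}) for the relative Euler characteristic, and then to recognize the two absolute Euler characteristics appearing there as Seiberg--Witten invariants, hence as periodic constants of the corresponding Poincar\'e series. First I would substitute $\bar{k}=K+2l'$ into (\ref{eq:ueueue}); since $R$ is linear with $R(K)=K_{\Gj}$, the restriction is $R(\bar{k})=K_{\Gj}+2R(l')$, and the formula becomes
$$eu(\bH^*_{rel}(\Gamma,j_0,K+2l'))=eu(\bH^*(\Gamma,[K+2l']))-\tfrac{(K+2l')^2+s}{8}-eu(\bH^*(\Gj,[R(K+2l')]))+\tfrac{(R(K+2l'))^2+(s-1)}{8}.$$
Thus everything is expressed through the invariants of the two honest negative definite graphs $\Gamma$ and $\Gj$ together with explicit quadratic correction terms.

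Next I would invoke the identification of the (normalized) Euler characteristic of lattice cohomology with the Seiberg--Witten invariant, equivalently with the periodic constant of the relevant component of $Z$, for a negative definite graph (cf. \cite{NSW}). Applying this to both $\Gamma$ and $\Gj$, the quadratic normalizations displayed above cancel against those built into the Seiberg--Witten normalization, and $eu(\bH^*_{rel})$ becomes the difference $\ssw(M(\Gamma))-\ssw(M(\Gj))$ of Seiberg--Witten invariants for the matching $spin^c$-structures $[K+2l']$ and $[R(K+2l')]$. This is precisely the quantity addressed by the surgery formula of \cite{BN}.

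The decisive step is then to show that this difference is computed by the one-variable reduction $\cH_{[l'],j_0}$. Here I would use the surgery formula of \cite{BN}, whose ``third term'' is exactly the periodic constant of the series obtained from $Z_{[l']}$ by the substitution $t_{j_0}=t^d$, $t_j=1$ ($j\neq j_0$) --- which is by definition $\cH_{[l'],j_0}$. For part (a), the hypothesis that all $a_j$ are large places $l'$ in the stable range in which the truncated partial sum $\sum_{i<dl'_{j_0}}c_i$ already agrees with the counting quasi-polynomial of $\cH_{[l'],j_0}$, the truncation bound $dl'_{j_0}$ being exactly $d$ times the $E_{j_0}$-coordinate of $l'$; this yields $\sum_{i<dl'_{j_0}}c_i=eu(\bH^*_{rel}(\Gamma,j_0,K+2l'))$. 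Part (b) then follows from (a): both sides of the identity are polynomials in $l'_{j_0}$ (the left via the eventual polynomiality of the partial sums, the right via the $r_0$-dependence in (\ref{eq:ueueue}) and the stabilization of the two absolute Euler characteristics), they agree for all large values, hence agree identically, so evaluating at the normalized representative $\bar{l}'_{j_0}\in[0,1)$ --- which is the normalization for which the periodic constant reads off the value of the quasi-polynomial at the base of its period --- gives $\mathrm{pc}(\cH_{[\bar{l}'],j_0})=eu(\bH^*_{rel}(\Gamma,j_0,K+2\bar{l}'))$.

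I expect the main obstacle to be the bookkeeping in this last step: one must verify that the specialization $t_{j_0}=t^d$, $t_j=1$ of the multivariable $Z_{[l']}$ produces precisely the one-variable series whose counting function interpolates $eu(\bH^*_{rel})$, keeping careful track of the factor $d=\det(\Gamma)$, of the half-space truncation at $dl'_{j_0}$, and of the quadratic normalization $(E^*_{j_0})^2=-\det(\Gj)/\det(\Gamma)$ from Lemma~\ref{l:int} that enters through $r_0$. Reconciling the \cite{BN} surgery formula (phrased for Seiberg--Witten invariants) with the lattice-cohomological quantity $eu(\bH^*_{rel})$ via (\ref{eq:ueueue}), and confirming the stabilization of the partial sums for large $a_j$, is where the real work lies; the remaining manipulations of (\ref{eq:ueueue}) and the passage from (a) to (b) through the theory of periodic constants (\cite{NO1}) are routine.
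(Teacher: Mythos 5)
Your reduction is the same one the paper uses: its entire proof reads ``Use (\ref{eq:ueueue}) and the identities (3.2.7) and (3.2.13) from \cite{NSW}'', and your first step --- substituting $\bar{k}=K+2l'$ into (\ref{eq:ueueue}) so that $eu(\bH^*_{rel})$ is expressed through the normalized Euler characteristics of $\Gamma$ and $\Gj$ --- is exactly that reduction. Where you diverge is in what you cite next: the paper invokes the ready-made identities of \cite{NSW}, which equate the \emph{partial sums} (resp.\ the periodic constant) of $\cH_{[l'],j_0}$ with precisely the combination of normalized Euler characteristics appearing in (\ref{eq:ueueue}), whereas you route through the Seiberg--Witten identification (\ref{eq:ThB}) and then the surgery formula of \cite{BN}. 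Since the \cite{NSW} identities are themselves the repackaging of the \cite{BN} formula, this is the same mathematical pipeline; but your reassembly of it leaves a genuine gap at part (a). (A small inaccuracy along the way: the quadratic terms do not ``cancel'' --- by (\ref{eq:ueueue}) they combine into $-(1+r_0^2(E^*_{j_0})^2)/8$, cf.\ Corollary \ref{cor:eu}; what \cite{BN} computes is the normalized difference, not the bare difference $\ssw(M(\Gamma))-\ssw(M(\Gj))$.)

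The gap is this: part (a) is a pointwise identity, one equation for \emph{each} representative $l'$ with all $a_j$ large, whereas the surgery formula of \cite{BN} in the form you use it (difference of Seiberg--Witten invariants equals a periodic constant) is one number per pair of $spin^c$ classes --- it is the analogue of part (b) only. Your argument that for large $a_j$ ``the truncated partial sum already agrees with the counting quasi-polynomial, and this yields the identity'' is a non sequitur: agreement of the partial sums with \emph{some} quasi-polynomial proves nothing until that quasi-polynomial is identified with the function $l'\mapsto eu(\bH^*_{rel}(\Gamma,j_0,K+2l'))$, and matching periodic constants (all that \cite{BN} provides at this level) pins down only its constant term. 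The non-constant part is not routine: besides the quadratic term $-r_0^2(E^*_{j_0})^2/8$, the right-hand side of (a) contains $eu(\bH^*(\Gj,[R(K+2l')]))$, and the class $[R(K+2l')]$ itself jumps as $l'$ moves inside the fixed class $[l']$, because $R(L(\Gamma))\not\subset L(\Gj)$ --- the paper stresses exactly this dependence at the end of \ref{ss:1re}. For the same reason your derivation of (b) from (a) fails as stated: both sides of (a) are \emph{quasi}-polynomials in $l'_{j_0}$, not polynomials, so the extrapolation must be carried out on each residue class separately, tracking which $\Gj$-class occurs there. Supplying precisely this full quasi-polynomial identification (not merely its periodic constant) is the content of identity (3.2.7) of \cite{NSW}, which is what the paper's citation does for it and what your stabilization argument does not reconstruct.
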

\begin{proof}
Use (\ref{eq:ueueue}) from above  and the identities (3.2.7) and (3.2.13) from \cite{NSW}.
\end{proof}

\subsection{The connection with the Seiberg--Witten invariants.}\label{ss:SeWi}
Let $M(\Gamma)$ be the oriented plumbed 3--manifold associated with $\Gamma$, and $-M(\Gamma)$ the
same 3--manifold with opposite orientation. In is known that the $spin^c$--structures of
$M(\Gamma)$ (and of $-M(\Gamma)$ too) can be identified with $Char/2L$,
see e.g. \cite{NOSZ,NLC}. Let ${\bf sw}(M(\Gamma),[\bar{k}])$
be the Seiberg--Witten invariant of $M(\Gamma)$ associated with the $spin^c$--structure $[\bar{k}]$.
Then, by Theorem B of the Introduction of \cite{NSW}
for a negative definite graph  $\Gamma$  one has
\begin{equation}\label{eq:ThB}
eu(\bH^*(\Gamma, [\bar{k}]))={\bf sw}(-M(\Gamma),[\bar{k}]).
\end{equation}
Hence the above statements can be reinterpreted in terms of Seiberg--Witten invariants as well.

\begin{example}\label{ex:rat} Let $\Gamma$ be the next graph, where all the vertices have decoration
$-2$ except the $j_0$ vertex which has $-3$.

\begin{picture}(300,45)(-50,0)
\put(40,30){\circle*{4}} \put(70,30){\circle*{4}}
\put(100,30){\circle*{4}} \put(130,30){\circle*{4}}
\put(160,30){\circle*{4}} \put(190,30){\circle*{4}}
\put(220,30){\circle*{4}} \put(250,30){\circle*{4}}
\put(100,10){\circle*{4}}
\put(40,30){\line(1,0){210}} \put(100,30){\line(0,-1){20}}
\put(250,38){\makebox(0,0){$j_0$}}
\end{picture}

$\det(\Gamma)=\det(\Gj)=1$, hence for  both graphs we have only one class.

 Moreover, $\Gj$ is rational (an $E_8$--graph), hence $\min w_{\Gj}=0$, $\bH^*_{red}(\Gj)=0$ and
 $eu(\bH^*(\Gj))=0$.

 On the other hand, $\Gamma$ is minimally elliptic, $\min w_\Gamma=0$, $\bH^*_{red}(\Gamma)=
 \bH^0_{red}(\Gamma)= \Z_{(0)}$, the rank one $\Z$--module concentrated at  degree zero.
Hence $eu(\bH^*(\Gamma))=1$.

By the long exact sequence, we get $\bH^q_{rel}(\Gamma,j_0,\bar{k})=0$ for any $\bar{k}$ and $q>0$.

It is easy to see that $K_\Gamma=-E^*_{j_0}$, and $(E^*_{j_0})^2=-1$. Therefore, if $\bar{k}=K+2l'$,
and $r_0:=-(\bar{k},E^*_{j_0})$ and $l'_{j_0}:=-(l',E^*_{j_0})$, then
$r_0=2l'_{j_0}-1$.
By (\ref{cor:eu}) or (\ref{eq:ueueue})
$${\mathrm rank}\,\bH^0_{rel}(\Gamma,j_0,\bar{k})=eu(\bH^*(\Gamma,j_0,\bar{k}))=
\frac{r_0^2+7}{8}=1+\frac{l'_{j_0}(l'_{j_0}-1)}{2}.$$
By a computation  one obtains  $$\cH_{[0],j_0}(t)=\frac{1-t^6}{(1-t^3)(1-t^2)(1-t)}=
\frac{1-t+t^2}{(1-t)^2}=1+t+2t^2+3t^3+4t^4+\cdots.$$
Then $$\sum_{i<l'_{j_0}}c_i=1+1+2+3+\cdots +(l'_{j_0}-1)=1+\frac{l'_{j_0}(l'_{j_0}-1)}{2},$$
hence (\ref{prop:PS})(a) follows. In order to exemplify part (b), we have to take $\bar{l}'$
with $\bar{l}'_{j_0}=0$, hence in this case $eu(\bH^*(\Gamma,j_0,\bar{k}))=1$. But one also has
$${\mathrm pc}\ \frac{1-t+t^2}{(1-t)^2}=1.$$
\end{example}

\begin{example}\label{ex:end} Assume that $\Gamma$ is a star--shaped graph with central vertex
$j_0$ and we fix $\bar{k}=K_\Gamma$.

Since all the connected components of $\Gj$ are strings (i.e. rational graphs), for them  (cf. \cite{NLC})
$$eu(\bH^*(\Gj,[K_{\Gj}]))-\frac{(K_{\Gj})^2+|\calj\setminus j_0|}{8}=0.$$
Moreover, $\bH^q(\Gamma,[K])=\bH^q_{rel}(\Gamma,j_0,[K])=0$ for $q>0$, and
\begin{equation}\label{eq:pg}
\begin{split}
{\mathrm rank}\, \bH^0_{rel}(\Gamma,j_0,[K])=eu(\bH^*(\Gamma,[K])-\frac{K^2+|\calj|}{8}&=
 {\mathrm rank}\, \bH^0(\Gamma,[K])-\min w_\Gamma-\frac{K^2+|\calj|}{8}\\
 &= {\mathrm rank}\, \bH^0(\Gamma,[K])-\min \chi_K.\end{split}
\end{equation}
This equals the periodic constant of $\cH_{[0],j_0}(t)$ by \cite{NLC,BN}. Moreover, if $(X,o)$ is a
weighted homogeneous normal surface singularity with minimal good resolution graph $\Gamma$, then
its {\it geometric genus} $p_g$  equals the last term of (\ref{eq:pg}), cf. e.g. \cite{NN2,NOSZ}.
Hence $p_g(X,o)={\mathrm rank}\, \bH^0_{rel}(\Gamma,j_0,[K])$.
\end{example}

\end{document}